\newtheorem{Theorem}{Theorem}[section]
\newtheorem{Proposition}[Theorem]{Proposition}
\newtheorem{Lemma}[Theorem]{Lemma}
\newtheorem{Corollary}[Theorem]{Corollary}
\theoremstyle{definition}
\newtheorem{Definition}[Theorem]{Definition}
\newtheorem{Notation}[Theorem]{Notation}
\theoremstyle{definition}
\theoremstyle{remark}
\def\supp{\text{supp}}
\newcommand{\bib}{\bibitem}
\def\supp{\text{supp }}
\begin{document}

\title{Coarse Lipschitz embeddings of James spaces}

\author{F. Netillard}
\address{Universit\'{e} de Franche-Comt\'{e}, Laboratoire de Math\'{e}matiques UMR 6623,
16 route de Gray, 25030 Besan\c{c}on Cedex, FRANCE.}
\email{francois.netillard@univ-fcomte.fr}

%\date{\today}

\subjclass[2010]{Primary 46B20; Secondary 46B80 }

\keywords{Banach spaces, James spaces, coarse Lipschitz embeddings}

\maketitle

\begin{abstract} We prove that, for $1 < p \neq q < \infty$, there does not exist any coarse Lipschitz embedding between the two James spaces $J_p$ and $J_q$, and that, for $1 < p < q < \infty$ and $1 < r < \infty$ such that $r \notin \{p,q\}$, $J_r$ does not coarse Lipschitz embed into $J_p \oplus J_q$.
\end{abstract}

\markboth{}{}

\section{Introduction}

\noindent Let $(M,d)$ and $(N,\delta)$ be two metric spaces and $f~:M \to N$.\\
The map $f$ is said to be a \textit{coarse Lipschitz embedding} if there exist $\theta$, $A$, $B > 0$ such that $$\forall~ x,~y \in M\ \ d(x,y) \geq \theta \Rightarrow Ad(x,y) \leq \delta(f(x),f(y)) \leq Bd(x,y).$$

\noindent Then we say that $M$ \textit{coarse Lipschitz embeds} into $N$.

\medskip\noindent R.C. James introduced in \cite{J} a non-reflexive space defined by~: $$J = \left\{x: \mathbb{N} \rightarrow \mathbb{R}~ \textnormal{s.t.} ~x(n) \rightarrow 0~ \textnormal{and}~ ||x||_{J} = \underset{p_1 < ... < p_n}{sup}\left(\sum\limits_{i=1}^{n-1}|x(p_{i+1}) - x(p_i)|^2\right)^{\frac{1}{2}} < \infty\right\}$$

\noindent We will use the following spaces (where $1 < p < \infty$), which are variants of $J$~: $$J_p = \left\{x: \mathbb{N} \rightarrow \mathbb{R}~ \textnormal{s.t.} ~x(n) \rightarrow 0~ \textnormal{and}~ ||x||_{J_p} = \underset{p_1 < ... < p_n}{sup}\left(\sum\limits_{i=1}^{n-1}|x(p_{i+1}) - x(p_i)|^p\right)^{\frac{1}{p}} < \infty\right\}$$

\noindent Like in the case of $J$, the codimension of $J_p$ in $J_p^{**}$ is $1$.

\noindent  In this respect,  we precise that $J_p^{\ast\ast}$ can be seen as~: $$J_p^{\ast\ast} = \left\{x: \mathbb{N} \rightarrow \mathbb{R}~ \textnormal{s.t.} ~ \underset{p_1 < ... < p_n}{sup}\left(\sum\limits_{i=1}^{n-1}|x(p_{i+1}) - x(p_i)|^p\right)^{\frac{1}{p}} < \infty\right\}$$

\noindent All those spaces were introduced in \cite{P}.

\medskip\noindent In $2007$, N.J. Kalton and N.L. Randrianarivony [5] proved that, if $r \notin \{p_1,~...,~p_n\}$ where $1 \leq p_1 < p_2 < ... <p_n < \infty$, then $\ell_r$ does not coarse Lipschitz embed into $\ell_{p_1} \oplus ... \oplus \ell_{p_n}$.

\noindent The aim of this article is to prove similar results for the $J_p$ spaces. One of the main obstacles is the lack of reflexivity, which was crucial in Kalton-Randrianarivony's work. However, the James spaces have nice properties of asymptotic uniform smoothness and weak$^*$ asymptotic uniform convexity that we shall use (see \cite{KR} or \cite{L1} for the definitions). We shall not refer to these notions in our paper, but we will build concrete equivalent norms on $J_p$ that will serve our purpose. Some compactness arguments will also be used to deal with the extra dimension in $J_p^{**}$.

\medskip\noindent This paper is organized as follows. In Section 2 we summarize the notation and terminology and we give the basic results. Section 3 contains the proof of the nonexistence of a coarse Lipschitz embedding between two James spaces $J_p$ and $J_q$ for $1 < p \neq q < \infty$. At the end of this last section,  we show that, for $1 < p < q < \infty$ and $1 < r < \infty$ such that $r \notin \{p,q\}$, $J_r$ does not coarse Lipschitz embed into $J_p \oplus J_q$.

\section{Preliminaries}

\begin{Notation}
Let $e_n$ defined by $e_n(k) = \delta_{n,k}$ for $k \in \mathbb{N}$. The sequence $(e_n)_{n = 0}^{\infty}$ is a Schauder basis of $J_p$ (where $p > 1$).

\noindent Moreover, the sequence $(e_n^{*})_{n=0}^{\infty}$ of the coordinate functionals associated with $(e_n)_{n=0}^{\infty}$ is a Schauder basis of $J_p^{*}$.

\noindent When $u$ and $v$ in $J_p$ have a consecutive and disjoint finite supports with respect to $(e_n)_{n=0}^{\infty}$, we will denote $u \prec v$.

\noindent Likewise, when $u^{*}$ and $v^{*}$ in $J_p^{*}$ have a consecutive and disjoint finite supports with respect to $(e_n^{*})_{n=0}^{\infty}$, we will denote $u^{*} \prec v^{*}$.
\end{Notation}

We start with the construction of an ad'hoc equivalent norm on $J_p$. We follow the construction given in \cite{L2} for $J_2$.

\begin{Lemma}\label{L1}
Let $x_1,~...,~x_n$ in $J_p$ such that their supports are consecutive and finite with respect to the basis $(e_n)_{n=0}^{\infty}$. Then
$$||\sum\limits_{i=1}^{n}x_i||_{J_p}^p \leq (2^p+1)\sum\limits_{i=1}^{n}||x_i||_{J_p}^p.$$
\end{Lemma}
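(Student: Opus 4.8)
The plan is to estimate the $J_p$-norm of the sum directly from its definition as a supremum over increasing sequences of indices. Fix an increasing sequence $q_1 < q_2 < \cdots < q_m$ in $\mathbb{N}$ and set $v_j = \big(\sum_{i=1}^n x_i\big)(q_j)$; it then suffices to bound $\sum_{j=1}^{m-1}|v_{j+1}-v_j|^p$ by $(2^p+1)\sum_{i=1}^n\|x_i\|_{J_p}^p$ uniformly in the chosen sequence, and to take the supremum at the end. Since the supports are finite, consecutive and disjoint, each $x_i$ is supported in an interval $I_i$ with $I_1 < I_2 < \cdots < I_n$, and at every coordinate the value of the sum is either the value of a single $x_i$ or $0$. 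First I would record the elementary fact that $|x_i(k)| \le \|x_i\|_{J_p}$ for every $k$, obtained by pairing $k$ with any coordinate lying outside $\supp x_i$.

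Next I would split the edges $j \mapsto j{+}1$ according to whether $q_j$ and $q_{j+1}$ fall in the same block $I_i$. For the within-block edges the values involved are all values of a single $x_i$, so for each $i$ the corresponding partial sum is itself an admissible sum for $x_i$ and is therefore bounded by $\|x_i\|_{J_p}^p$; summing over $i$ contributes the term $\sum_i\|x_i\|_{J_p}^p$, which accounts for the $+1$ in the constant. The remaining edges are the crossing ones, and here I would use the key structural observation that between two consecutive blocks which actually meet the sequence every intermediate chosen coordinate lies in a gap and hence carries value $0$. Consequently a crossing between a block $i$ and the next occupied block $i'$ either is a single edge of cost $|f_{i'}-g_i|^p$ (no gap coordinate chosen) or splits through zeros into a cost $|g_i|^p + |f_{i'}|^p$, where $f_i$ and $g_i$ denote the values at the first and last chosen coordinates inside $I_i$. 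Using $(a+b)^p \le 2^{p-1}(a^p+b^p)$ in the first case and $1 \le 2^{p-1}$ in the second, every crossing costs at most $2^{p-1}(|g_i|^p + |f_{i'}|^p)$.

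The decisive bookkeeping point, which I expect to be the main obstacle, is that each block $i$ feeds its value $f_i$ into at most one crossing (the one entering $I_i$) and its value $g_i$ into at most one crossing (the one leaving $I_i$), while the two possible crossings at the ends of the sequence only improve matters since their coefficient is $1$. Granting this, the crossing edges total at most $2^{p-1}\sum_i(|f_i|^p + |g_i|^p) \le 2^{p-1}\sum_i 2\|x_i\|_{J_p}^p = 2^p\sum_i\|x_i\|_{J_p}^p$, and adding the within-block contribution yields $(2^p+1)\sum_i\|x_i\|_{J_p}^p$, as desired. The genuine care is entirely in the discrete geometry of how one increasing sequence can weave through the blocks and the gaps between them, making sure that no boundary value $f_i$ or $g_i$ is charged twice and that the isolated case of a block meeting the sequence in a single coordinate (where $f_i = g_i$) is absorbed correctly.
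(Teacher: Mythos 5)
Your proof is correct and follows essentially the same route as the paper's: both decompose an arbitrary admissible sum $\sum_j |y(q_{j+1})-y(q_j)|^p$ into within-block contributions (each an admissible sum for a single $x_i$, hence at most $\|x_i\|_{J_p}^p$, giving the $+1$) and crossing edges bounded via $|a-b|^p\le 2^{p-1}(|a|^p+|b|^p)$ together with the coordinate estimate $|x_i(k)|\le\|x_i\|_{J_p}$, charging each block at most twice for crossings (giving the $2^p$). The only difference is cosmetic bookkeeping --- you treat gap coordinates as explicit zeros while the paper absorbs each gap into the preceding block's interval --- and your version is in fact slightly more careful about the edge indexing.
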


\begin{proof}
For $x \in J_p$, we denote $supp(x) = \{n \in \mathbb{N},~ e_n^{*}(x) \neq 0\}$. Then we can find disjoint intervals in $\mathbb{N},~ \llbracket p_i,~ p'_i \rrbracket$, with $1 \leq i \leq n$ and $p'_i < p_{i+1}$, such that~:

\noindent $\forall~ 1\leq i \leq n,~ supp(x_i) \subset \llbracket p_i,~ p'_i \rrbracket$ (for convenience, we fix $p_1 = 0$ et we denote $p_{n+1} = \infty$).

\noindent Let now $q_1 < ... < q_k$ be an arbitrary sequence in $\mathbb{N}$. We must show that $$\sum\limits_{j=1}^{k-1}\left|y(q_j) - y(q_{j+1})\right|^p \leq (2^p+1)\sum_{i=1}^{n}||x_i||_{J_p}^p,$$ where $y = \sum\limits_{i=1}^{n}x_i$.

\noindent There exist an increasing sequence $(j_m)_{m=1}^{l}$ in $\{1,~...,~k\}$ with $j_1 = 1$, and an increasing sequence $(i_m)_{m=1}^{l}$ in $\{1,~...,~n\}$ such that, for any $1 \leq m \leq l$, $\{q_{j_m},~...,~q_{j_{m+1}}-1\} \subset~\llbracket~p_{i_m},~ p_{i_{m+1}}\llbracket$). Therefore
$$\sum\limits_{j=1}^{k-1}\left|y(q_j) - y(q_{j+1})\right|^p=$$
$$\sum\limits_{j=1}^{j_2 - 1}\left|y(q_j) - y(q_{j+1})\right|^p + \left|y(q_{j_2 - 1}) - y(q_{j_2})\right|^p + ... + \sum\limits_{j=j_{l-1}}^{j_l-1}\left|y(q_j) - y(q_{j+1})\right|^p.$$
We have that
$$\forall~ 1 \leq m \leq l-1, \sum\limits_{j=j_m}^{j_{m+1}-1}\left|y(q_j) - y(q_{j+1})\right|^p \leq ||x_{i_m}||_{J_p}^p.$$

\noindent And for all  $2 \leq m \leq l-1$,
$$\left|y(q_{j_m-1}) - y(q_{j_m})\right|^p \leq 2^{p-1}\left|y(q_{j_m - 1})\right|^p + 2^{p-1}\left|y(q_{j_m})\right|^p \leq 2^{p-1}||x_{i_m}||_{J_p}^p + 2^{p-1}||x_{i_{m+1}}||_{J_p}^p.$$
Then
$$\sum\limits_{j=1}^{k-1}(y(q_j) - y(q_{j+1}))^p\leq (2^{p-1}+1)||x_{i_1}||_{J_p}^p + (2^p+1)||x_{i_2}||_{J_p}^p + ... + (2^p+1)||x_{i_{l-1}}||_{J_p}^p + (2^{p-1}+1)||x_{i_l}||_{J_p}^p.$$
This concludes the proof of our Lemma.
\end{proof}

\noindent We now define a new norm on $J_p^*$ as follows. Let $q$ be the conjugate exponent of $p$. For $x^{*} \in J_p^{*}$, we set
$$|x^{*}|_{J_p^{*}} = \sup\Big\{(\sum\limits_{i=1}^{n}||x_i^{*}||^q_{J_p^{*}})^{\frac{1}{q}}~: x^{*} = x_1^{*} + ... + x_n^{*} ~et~ x_1^{*} \prec ... \prec x_n^{*} \Big\},$$
where $||.||_{J_p^{*}}$ denotes the dual norm of $||.||_{J_p}$. We can now state the following proposition.

\begin{Proposition}\label{P0} The norm $|.|_{J_p^{*}}$ is the dual norm of an equivalent norm on $J_p$ (that we shall denote $|.|_{J_p}$).\\
Moreover, $|.|_{J_p^{*}}$ satisfies the following property: for any $x^*,y^*$ in $J_p^*$ such that $x^{*} \prec y^{*}$, we have that
$$|x^{*} + y^{*}|^q_{J_p^{*}} \geq |x^{*}|^q_{J_p^{*}} + |y^{*}|^q_{J_p^{*}}.$$
\end{Proposition}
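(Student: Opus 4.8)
The plan is to derive the first assertion from the general principle that a norm on a dual space is a dual norm precisely when it is weak$^*$ lower semicontinuous (equivalently, when its closed unit ball is weak$^*$ closed). So I would show that $|\cdot|_{J_p^{*}}$ is a norm on $J_p^{*}$, that it is equivalent to $\|\cdot\|_{J_p^{*}}$, and that it is weak$^*$ lower semicontinuous; the predual norm $|\cdot|_{J_p}$ is then defined by $|x|_{J_p} = \sup\{\langle x^{*},x\rangle : |x^{*}|_{J_p^{*}}\le 1\}$, and the bipolar theorem gives that its dual is exactly $|\cdot|_{J_p^{*}}$. For the computations it is convenient to record the reformulation, valid for finitely supported $x^{*}$ and extended to all of $J_p^{*}$ by allowing arbitrary finite families of consecutive intervals,
\[
|x^{*}|_{J_p^{*}} = \sup\Big\{\big(\sum_{i=1}^{n}\|I_i x^{*}\|_{J_p^{*}}^{q}\big)^{1/q} : I_1 < \dots < I_n \text{ consecutive intervals}\Big\},
\]
where $I_i x^{*}$ denotes the restriction of $x^{*}$ to the coordinates in $I_i$.

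That $|\cdot|_{J_p^{*}}$ is a seminorm is then immediate: for a fixed family of intervals the functional $x^{*}\mapsto(\sum_i\|I_ix^{*}\|^q)^{1/q}$ is an $\ell_q$-combination of the seminorms $x^{*}\mapsto\|I_ix^{*}\|$, hence a seminorm by Minkowski's inequality, and a supremum of seminorms is a seminorm. Taking the single interval $I_1=\mathbb{N}$ shows $|x^{*}|_{J_p^{*}}\ge\|x^{*}\|_{J_p^{*}}$, so $|\cdot|_{J_p^{*}}$ is a norm. For the reverse inequality I would test a decomposition $x^{*}=x_1^{*}+\dots+x_n^{*}$ (with $x_1^{*}\prec\dots\prec x_n^{*}$ supported on consecutive intervals $I_i$) against a single vector built from Lemma \ref{L1}: for each $i$ choose $z_i\in J_p$ supported in $I_i$ with $\|z_i\|_{J_p}\le 1$ and $\langle x_i^{*},z_i\rangle\ge(1-\eps)\|x_i^{*}\|_{J_p^{*}}/(2K)$ (the factor $2K$, with $K$ the basis constant, accounting for the interval projection), and set $w=\sum_i\|x_i^{*}\|_{J_p^{*}}^{q-1}z_i$. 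Since the $z_i$ have consecutive supports, Lemma \ref{L1} gives $\|w\|_{J_p}^{p}\le(2^p+1)\sum_i\|x_i^{*}\|_{J_p^{*}}^{(q-1)p}=(2^p+1)\sum_i\|x_i^{*}\|_{J_p^{*}}^{q}$, while the disjointness of supports yields $\langle x^{*},w\rangle=\sum_i\|x_i^{*}\|_{J_p^{*}}^{q-1}\langle x_i^{*},z_i\rangle\ge\frac{1-\eps}{2K}\sum_i\|x_i^{*}\|_{J_p^{*}}^{q}$. Dividing, using $(q-1)p=q$ and $1-\tfrac1p=\tfrac1q$, and letting $\eps\to0$ gives $(\sum_i\|x_i^{*}\|_{J_p^{*}}^{q})^{1/q}\le 2K(2^p+1)^{1/p}\|x^{*}\|_{J_p^{*}}$; taking the supremum over decompositions proves $|\cdot|_{J_p^{*}}\le 2K(2^p+1)^{1/p}\|\cdot\|_{J_p^{*}}$, so the two norms are equivalent.

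For weak$^*$ lower semicontinuity I would note that for each fixed interval $I$ the map $x^{*}\mapsto I x^{*}$ is finite-rank and its value depends only on the finitely many scalars $\langle x^{*},e_k\rangle$, $k\in I$; each of these is weak$^*$ continuous, so $x^{*}\mapsto\|Ix^{*}\|_{J_p^{*}}$ and hence each $x^{*}\mapsto(\sum_i\|I_ix^{*}\|^q)^{1/q}$ is weak$^*$ continuous. A supremum of weak$^*$ continuous functions is weak$^*$ lower semicontinuous, so the unit ball of $|\cdot|_{J_p^{*}}$ is weak$^*$ closed; being also convex, balanced and norm-bounded, it is the polar of the unit ball of an equivalent norm $|\cdot|_{J_p}$ on $J_p$, which establishes the first assertion.

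The second assertion follows directly from the definition by concatenation. Given $x^{*}\prec y^{*}$, fix $\eps>0$ and choose decompositions $x^{*}=u_1^{*}+\dots+u_m^{*}$ with $u_1^{*}\prec\dots\prec u_m^{*}$ and $y^{*}=v_1^{*}+\dots+v_k^{*}$ with $v_1^{*}\prec\dots\prec v_k^{*}$ such that $\sum_i\|u_i^{*}\|_{J_p^{*}}^{q}\ge|x^{*}|_{J_p^{*}}^{q}-\eps$ and $\sum_j\|v_j^{*}\|_{J_p^{*}}^{q}\ge|y^{*}|_{J_p^{*}}^{q}-\eps$. Since the support of $x^{*}$ lies entirely to the left of that of $y^{*}$, we have $u_m^{*}\prec v_1^{*}$, so $u_1^{*}\prec\dots\prec u_m^{*}\prec v_1^{*}\prec\dots\prec v_k^{*}$ is an admissible decomposition of $x^{*}+y^{*}$. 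By definition of $|\cdot|_{J_p^{*}}$ this gives $|x^{*}+y^{*}|_{J_p^{*}}^{q}\ge\sum_i\|u_i^{*}\|_{J_p^{*}}^{q}+\sum_j\|v_j^{*}\|_{J_p^{*}}^{q}\ge|x^{*}|_{J_p^{*}}^{q}+|y^{*}|_{J_p^{*}}^{q}-2\eps$, and letting $\eps\to0$ yields the claim. The main obstacle is the first assertion — specifically reconciling the finite-decomposition definition with an expression that makes sense on all of $J_p^{*}$ and proving the upper equivalence bound via Lemma \ref{L1}; the superadditivity in the second part is then essentially formal.
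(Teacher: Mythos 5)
Your proof is correct and follows essentially the same route as the paper: the triangle inequality via Minkowski over a common decomposition, the reverse equivalence bound by pairing a decomposition with a test vector $w=\sum_i\|x_i^{*}\|^{q-1}z_i$ and applying Lemma \ref{L1}, weak$^*$ lower semicontinuity to conclude it is a dual norm, and concatenation of decompositions for the superadditivity. The only differences are cosmetic improvements: your interval-restriction reformulation cleanly handles non-finitely-supported functionals, and you actually spell out the weak$^*$ lower semicontinuity argument that the paper merely asserts as clear.
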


\begin{proof} To show that $|.|_{J_p^*}$ is a norm, we only detail the proof of the triangle inequality~: let $(x^*,~y^*) \in \left(J_p^*\right)^2$ that we may assume with finite supports. Let now $u_1^* \prec u_2^* \prec ... \prec u_n^*$ in $J_p^*$ such that
$$x^* + y^* = u_1^* + ... + u_n^*$$.

\noindent We write, for $i \in \llbracket 1,~ n\rrbracket$, $u_i^* = x_i^* + y_i^*$, where $x^* = \sum\limits_{i=1}^{n}x_i^*$ and $y^* = \sum\limits_{i=1}^{n}y_i^*$.

\noindent Thanks to the triangle inequality for $||.||_{J_p^*}$, we get:

$$\left(\sum\limits_{i=1}^{n}||u_i^*||_{J_p^*}^q\right)^{\frac{1}{q}} \leq \left(\sum\limits_{i=1}^{n}\left(||x_i^*||_{J_p^*} + ||y_i^*||_{J_p^*}\right)^q\right)^{\frac{1}{q}}.$$

\noindent It then follows from Minkowski's inequality that

$$\left(\sum\limits_{i=1}^{n}||u_i^*||_{J_p^*}^q\right)^{\frac{1}{q}} \leq \left(\sum\limits_{i=1}^{n}||x_i^*||_{J_p^*}^q\right)^{\frac{1}{q}} + \left(\sum\limits_{i=1}^{n}||y_i^*||_{J_p^*}^q\right)^{\frac{1}{q}} \leq |x^*|_{J_p^*} + |y^*|_{J_p^*}.$$

\noindent We have shown that the triangle inequality is valid for $|.|_{J_p^*}$.

\medskip Next we show that there exists $c > 0$ (which will be detailed later) such that, for $x_1^*$, ..., $x_n^*$ in $J^{*}$ satisfying $x_1^* \prec ... \prec x_n^*$ with respect to the basis $(e_n^*)_{n=0}^{\infty}$~:
\begin{equation}\label{1}
||\sum\limits_{i=1}^{n}x_i^{*}||_{J_p^{*}}^q \geq c\sum\limits_{i=1}^{n}||x_i^{*}||_{J_p^{*}}^q.
\end{equation}

\noindent So, let $x_1^* \prec ... \prec x_n^*$, with, for $i \in \llbracket1,~ n\rrbracket,~ supp(x_i^*) \subseteq \llbracket p_i,~ q_i\rrbracket$, where $q_i < p_{i+1}$ for $i \in \llbracket 1,~ n-1\rrbracket$. Fix now $\varepsilon > 0$.

\noindent Since $(e_i)_{i=0}^{\infty}$ is a monotone basis: $$\exists~x_i \in J_p,~\left\{\begin{array}{rl}x_i^*(x_i)&\geq~ ||x_i^*||_{J_p^*}^q - \varepsilon\\||x_i||_{J_p}&\leq~ 2||x_i^*||_{J_p^*}^{q-1} \\supp(x_i)&\subseteq~ \llbracket p_i,~ q_i\rrbracket\end{array}\right.$$

\noindent Thanks to Lemma \ref{L1}~: $\exists~ C > 0,~||\sum\limits_{i=1}^{n}x_i||_{J_p}^p \leq C\sum\limits_{i=1}^{n}||x_i||_{J_p}^p$ (where $C = 2^p + 1$).

\noindent Since $||.||_{J_p^*}$ is the dual norm of $||.||_{J_p}$, we have that $$||\sum\limits_{i=1}^{n}x^*_i||_{J_p^*} \geq \left(\sum\limits_{i=1}^{n}x_i^*\right)\left(\sum\limits_{i=1}^{n}x_i\right)
\left(||\sum\limits_{i=1}^{n}x_i||_{J_p}\right)^{-1}$$
and
$$||\sum\limits_{i=1}^{n}x_i^*||_{J_p^*} \geq \left(\sum\limits_{i=1}^{n}x_i^*(x_i)\right)\left(||\sum\limits_{i=1}^{n}x_i||_{J_p}\right)^{-1} \geq \left(\sum\limits_{i=1}^{n}||x_i^*||_{J_p^*}^q - \varepsilon\right)\left(C^{\frac{1}{p}}
\left(\sum\limits_{i=1}^{n}||x_i||^p_{J_p}\right)^{\frac{1}{p}}\right)^{-1}.$$

\noindent Moreover, $\left(\sum\limits_{i=1}^{n}||x_i||^p_{J_p}\right)^{\frac{1}{p}} \leq 2\left(\sum\limits_{i=1}^{n}||x_i^*||^{p(q-1)}_{J_p^*}\right)^{\frac{1}{p}} = 2\left(\sum\limits_{i=1}^{n}||x_i^*||^q_{J_p^*}\right)^{\frac{1}{p}}$.

\noindent Letting $\varepsilon$ tend to $0$, we obtain~: $$||\sum\limits_{i=1}^{n}x_i^*||_{J_p^*} \geq \dfrac{1}{2C^\frac{1}{p}}\left(\sum\limits_{i=1}^{n}||x_i^*||^q_{J_p^*}\right)^{1 - \frac{1}{p}} = \dfrac{1}{2C^\frac{1}{p}}\left(\sum\limits_{i=1}^{n}||x_i^*||^q_{J_p^*}\right)^{\frac{1}{q}}.$$
So, we have established inequality (\ref{1}) with $c = \dfrac{1}{2^qC^{\frac{q}{p}}} = \dfrac{1}{2^qC^{q-1}}$.

\noindent It follows easily that
$$||x^{*}||_{J_p^{*}} \leq |x^{*}|_{J_p^{*}} \leq c^{-1/q}||x^{*}||_{J_p^{*}}.$$

\noindent Moreover, $|.|_{J_p^{*}}$ is the dual norm of an equivalent norm on $J_p$. Indeed, it is clear that $|.|_{J_p^{*}}$ is $\sigma(J_p^{*},J_p)$ lower semi-continuous.

\noindent Finally, it follows clearly from the definition of $|.|_{J_p^*}$ that for all  $x^*,y^*$ in $J_p^*$ such that $x^{*} \prec y^{*}$, we have that
$$|x^{*} + y^{*}|^q_{J_p^{*}} \geq |x^{*}|^q_{J_p^{*}} + |y^{*}|^q_{J_p^{*}}.$$

\end{proof}

\begin{Corollary}\label{C1}
The dual norm $|.|_{J_p^{**}}$ of $|.|_{J_p^{*}}$ satisfies the following property.

\noindent For $x \in J_p$ with a finite support and $y \in J_p^{**}$ (not necessarily with finite support) such that $x \prec y$, we have $$|x + y|_{J_p^{**}}^p \leq |x|_{J_p^{**}}^p + |y|_{J_p^{**}}^p.$$
\end{Corollary}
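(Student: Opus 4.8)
The plan is to dualize the lower $\ell_q$-estimate of Proposition \ref{P0} into the desired upper $\ell_p$-estimate, by testing $x+y$ against the unit ball of $(J_p^*,|.|_{J_p^*})$ and splitting each test functional along the supports of $x$ and $y$. I would start from $|x+y|_{J_p^{**}} = \sup\{(x+y)(\phi) : \phi \in J_p^*,\ |\phi|_{J_p^*}\le 1\}$. Since $(e_n^*)_{n=0}^{\infty}$ is a Schauder basis of $J_p^*$, the finitely supported functionals are norm dense in its unit ball, so by norm continuity of $x+y$ on $J_p^*$ it suffices to take the supremum over finitely supported $\phi$ with $|\phi|_{J_p^*}\le 1$.

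Fix $N$ with $\supp(x)\subseteq \llbracket 0,N\rrbracket$ and $\supp(y)\subseteq \llbracket N+1,\infty\llbracket$, which is possible because $x$ has finite support and $x\prec y$. Given a finitely supported $\phi$ with $|\phi|_{J_p^*}\le 1$, I would decompose $\phi = u^* + v^*$, where $u^*$ is the part of $\phi$ carried by the coordinates $\llbracket 0,N\rrbracket$ and $v^*$ the part carried by $\llbracket N+1,\infty\llbracket$. Then $u^*\prec v^*$, and since $v^*$ is supported away from $\supp(x)$ while $u^*$ is supported away from $\supp(y)$, one checks directly that $x(\phi)=x(u^*)$ and $y(\phi)=y(v^*)$. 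Hence $(x+y)(\phi)=x(u^*)+y(v^*)\le |x|_{J_p^{**}}\,|u^*|_{J_p^*}+|y|_{J_p^{**}}\,|v^*|_{J_p^*}$.

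Next I would invoke Proposition \ref{P0}: because $u^*\prec v^*$ we have $|u^*|_{J_p^*}^q+|v^*|_{J_p^*}^q\le |u^*+v^*|_{J_p^*}^q=|\phi|_{J_p^*}^q\le 1$. Writing $a=|u^*|_{J_p^*}$ and $b=|v^*|_{J_p^*}$, Hölder's inequality for the conjugate exponents $p$ and $q$ gives
$$ |x|_{J_p^{**}}\,a + |y|_{J_p^{**}}\,b \le \left(|x|_{J_p^{**}}^p + |y|_{J_p^{**}}^p\right)^{1/p}\left(a^q + b^q\right)^{1/q} \le \left(|x|_{J_p^{**}}^p + |y|_{J_p^{**}}^p\right)^{1/p}. $$
Taking the supremum over all admissible $\phi$ then yields $|x+y|_{J_p^{**}}\le \left(|x|_{J_p^{**}}^p+|y|_{J_p^{**}}^p\right)^{1/p}$, which is the claimed inequality.

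The density reduction and the Hölder step are routine; the one point needing care — and the main obstacle — is the decomposition step, namely verifying that cutting $\phi$ at the index $N$ produces a pair $u^*\prec v^*$ satisfying $x(\phi)=x(u^*)$ and $y(\phi)=y(v^*)$, so that Proposition \ref{P0} applies \emph{exactly} to the pair arising from the test functional. If one prefers to argue with a general (possibly infinitely supported) $\phi$ instead of reducing to finitely supported ones, the same conclusion follows by truncating $v^*$ to its first $n$ coordinates, applying Proposition \ref{P0} to the finitely supported pair $u^*\prec v_n^*$, and letting $n\to\infty$ using norm continuity of $|.|_{J_p^*}$.
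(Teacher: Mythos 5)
Your proof is correct and follows essentially the same route as the paper: both test $x+y$ against a functional in $J_p^*$, split it at the gap between the supports of $x$ and $y$ into a pair $u^*\prec v^*$ with $x(\phi)=x(u^*)$ and $y(\phi)=y(v^*)$, apply H\"older with exponents $p,q$, and invoke the lower $q$-estimate of Proposition \ref{P0}. The only (cosmetic) difference is that the paper works with a single near-norming functional $z^*$ normalized so that $|z^*|_{J_p^*}=|x+y|_{J_p^{**}}^{p-1}$ and divides at the end, whereas you take a supremum over the unit ball after a routine density reduction.
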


\begin{proof}
Let $x \in J_p$ which has a finite support and $y \in J_p^{**}$ such that $x \prec y$, with $supp(x) \subset~ \llbracket m,~ n\rrbracket$, $supp(y) \subset \llbracket m',~ +\infty)$ and $n < m'$. Fix $\varepsilon > 0$.

\noindent There exists $z^{*} \in J_p^{*}$ such that $$|z^{*}|_{J_p^{*}} = |x + y|^{p-1}_{J_p^{**}} ~~~~ \textnormal{ and } ~~~~z^{*}(x + y) \geq |x + y|^p_{J_p^{**}} - \varepsilon.$$

\noindent Moreover, we can write $z^{*} = x^{*} + y^{*}$, with $x^{*} \prec y^{*}$, $z^{*}(x) = x^{*}(x)$ and $z^{*}(y) = y^{*}(y)$.

\noindent We deduce that $|x + y|^p_{J_p^{**}} \leq x^{*}(x) + y^{*}(y) + \varepsilon$. Then H\"older's inequality and Proposition \ref{P0} yield
$$|x + y|^p_{J_p^{**}} \leq (|x^{*}|^q_{J_p^{*}} + |y^{*}|^q_{J_p^{*}})^{\frac{1}{q}}(|x|^p_{J_p^{**}} + |y|^p_{J_p^{**}})^{\frac{1}{p}} + \varepsilon \le (|x^{*} + y^{*}|_{J_p^{*}})(|x|^p_{J_p^{**}} + |y|^p_{J_p^{**}})^{\frac{1}{p}} + \varepsilon.$$
Since $|z^{*}|_{J_p^{*}} = |x + y|^{p-1}_{J_p^{**}}$, we get
$$|x + y|^p_{J_p^{**}} \leq (|x + y|^{p-1}_{J_p^{**}})(|x|^p_{J_p^{**}} + |y|^p_{J_p^{**}})^{\frac{1}{p}} + \varepsilon.$$
This finishes our proof.
\end{proof}

We now turn to the study of the coarse Lipschitz embeddings between James spaces. Let us first recall some notation.
\begin{Definition} Let $(M,d)$ and $(N,\delta)$ be two metric
spaces and $f:M\to N$ be a mapping. If $(M,d)$ is unbounded, we define
$$\forall s>0,\ \ Lip_s(f)=\sup\Big\{\frac{\delta((f(x),f(y))}{d(x,y)},\ d(x,y)\ge s\Big\}\ \
{\rm and}\ \ Lip_\infty(f)=\inf_{s>0}Lip_s(f).$$
\end{Definition}
Note that $f$ is coarse Lipschitz if and only if $Lip_\infty(f)<\infty$.

\medskip We also recall a classical definition.

\begin{Definition}
Given a metric space $X$, two points $x,~y \in X$, and $\delta > 0$, the approximate metric midpoint set between $x$ and $y$ with error $\delta$ is the set~: $$Mid(x,~y,~\delta) = \left\{z \in X ~:~ \textnormal{max}\{ d(x,~z),~d(y,~z)\} \leq (1 + \delta)\dfrac{d(x,~y)}{2}\right\}$$
\end{Definition}

The use of approximate metric midpoints in the study of nonlinear geometry is due to Enflo in an unpublished paper and has been used elsewhere, e.g. \cite{Bo}, \cite{G} and \cite{Jo}.
The next proposition and its proof can be found for instance in \cite{KR} and \cite{L1}.

\begin{Proposition}\label{P2} Let $X$ be a normed space and suppose $M$ is a metric space. Let $f~: X \rightarrow M$ be a coarse Lipschitz map. If $Lip_{\infty}(f) > 0$, then for any $t,~\varepsilon > 0$ and any $0 < \delta < 1$, there exist $x,~y \in X$ with $||x - y|| > t$ and $$f\left(Mid(x,~y,~\delta)\right) \subset Mid\left(f(x),~f(y),~(1 + \varepsilon)\delta\right).$$
\end{Proposition}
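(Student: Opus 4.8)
Set $L=Lip_\infty(f)$, which is finite since $f$ is coarse Lipschitz and strictly positive by hypothesis, and denote by $\rho$ the metric of $M$. The plan is to produce a single pair $x,y$ at large distance that is \emph{simultaneously} nearly extremal for the lower Lipschitz estimate at its own scale and subject to a nearly optimal \emph{upper} Lipschitz estimate at the half-scale $\|x-y\|/2$; any approximate midpoint $z$ then inherits both bounds, and comparing them gives the inclusion. The whole argument is driven by a single small auxiliary parameter $\eta>0$, to be fixed at the end in terms of $\delta$ and $\varepsilon$.

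First I would exploit that $s\mapsto Lip_s(f)$ is nonincreasing with $\inf_s Lip_s(f)=L$: choose $S>0$ with $Lip_S(f)\le(1+\eta)L$. Next, since $Lip_s(f)\ge Lip_\infty(f)=L$ for every $s$, the supremum defining $Lip_{s_0}(f)$ is at least $L$ at every scale $s_0$; so, having fixed $s_0>\max\{t,\,2S/(1-\delta)\}$, I can select $x,y\in X$ with $\|x-y\|\ge s_0$ (in particular $\|x-y\|>t$) and
$$\rho(f(x),f(y))\ \ge\ (1-\eta)L\,\|x-y\|.$$
This is the lower bound on the image diameter.

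Now take any $z\in Mid(x,y,\delta)$. From $\|x-z\|+\|y-z\|\ge\|x-y\|$ together with $\max\{\|x-z\|,\|y-z\|\}\le(1+\delta)\|x-y\|/2$, each of $\|x-z\|$ and $\|y-z\|$ lies in $[(1-\delta)\|x-y\|/2,\ (1+\delta)\|x-y\|/2]$; in particular both are $\ge(1-\delta)s_0/2\ge S$. Hence the upper estimate at scale $S$ applies to both pairs, giving
$$\max\{\rho(f(x),f(z)),\rho(f(y),f(z))\}\ \le\ (1+\eta)L\cdot\frac{(1+\delta)}{2}\,\|x-y\|.$$
Dividing by the lower bound for $\rho(f(x),f(y))$ yields
$$\frac{\max\{\rho(f(x),f(z)),\rho(f(y),f(z))\}}{\rho(f(x),f(y))}\ \le\ \frac{(1+\eta)(1+\delta)}{2(1-\eta)}.$$

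Finally, as $\eta\to0$ the right-hand side tends to $(1+\delta)/2$, which is strictly below $(1+(1+\varepsilon)\delta)/2$ precisely because $\varepsilon>0$. Thus for $\eta$ small enough (depending only on $\delta$ and $\varepsilon$) the ratio is at most $(1+(1+\varepsilon)\delta)/2$, i.e. $f(z)\in Mid(f(x),f(y),(1+\varepsilon)\delta)$, as required. I expect the only delicate point to be the coordination of the two scales in the second step: one must pick $s_0$ large relative to $S$ so that the half-length pairs $\{x,z\}$ and $\{y,z\}$ are still long enough for the near-optimal upper Lipschitz bound to apply, and this is exactly what the lower estimate $(1-\delta)\|x-y\|/2\ge S$ secures. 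Everything else is the interplay between the triangle inequality and the monotonicity of $Lip_s(f)$.
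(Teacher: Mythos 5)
Your proof is correct, and it is essentially the standard argument for this proposition: the paper does not prove it but defers to \cite{KR} and \cite{L1}, where exactly this scheme is used --- pick a scale $S$ at which $Lip_S(f)\le(1+\eta)Lip_\infty(f)$, find a pair at a much larger scale $s_0$ nearly attaining the lower bound $Lip_\infty(f)$, and observe that any $\delta$-approximate midpoint is at distance at least $(1-\delta)\|x-y\|/2\ge S$ from each endpoint so the near-optimal upper bound applies. Your coordination of the two scales and the final choice of $\eta$ in terms of $\varepsilon\delta$ are exactly right.
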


Let us now recall the definition of the metric graphs introduced in \cite{KR} that will be crucial in our proofs.

\begin{Notation}
Let $\mathbb{M}$ be an infinite subset of $\mathbb{N}$ and $k \in \mathbb{N}$. We denote $$G_k(\mathbb{M}) = \{\overline{n} = (n_1, ..., n_k),~n_i \in \mathbb{M}\hspace{0.5cm} n_1 < ... < n_k\}.$$
Then we equip $G_k(\mathbb{M})$ with the distance $d(\overline{n},\overline{m}) = |\{j,~n_j \neq m_j\}|$.
\end{Notation}

We end these preliminaries by recalling Ramsey's theorem and one of its immediate  corollaries (see \cite{GR} for instance).
\begin{Theorem}
Let $k,~r \in \mathbb{N}$ and $f~: G_k(\mathbb{N}) \rightarrow \{1,...,r\}$ be any map. Then there exists an infinite subset $\mathbb{M}$ of $\mathbb{N}$ and $i \in \{1,...,r\}$ such that, for every $\overline{n} \in G_k(\mathbb{M})$, $f(\overline{n}) = i$.
\end{Theorem}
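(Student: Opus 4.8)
The statement to be proved is the infinite Ramsey theorem, so the plan is to argue by induction on the arity $k$, viewing $f$ as an $r$-colouring of the increasing $k$-tuples (equivalently, the $k$-element subsets) of $\mathbb{N}$. For the base case $k=1$ the map $f$ simply colours the points of $\mathbb{N}$ with $r$ colours, so by the pigeonhole principle one colour class must be infinite; taking $\mathbb{M}$ to be that class and $i$ the corresponding colour settles this case at once.

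For the inductive step I would assume the result for $k-1$ and construct, by a nested-set (Erd\H{o}s--Rado type) argument, a strictly increasing sequence $a_1 < a_2 < \cdots$ in $\mathbb{N}$, a decreasing chain of infinite sets $\mathbb{N} \supset \mathbb{M}_1 \supset \mathbb{M}_2 \supset \cdots$ with $a_m = \min \mathbb{M}_{m-1}$ and $a_{m'} \in \mathbb{M}_m$ for every $m' > m$, and colours $c_1, c_2, \ldots \in \{1,\dots,r\}$. At stage $m$, having fixed $a_m$, I would apply the inductive hypothesis to the ``link'' colouring $g(\overline{s}) = f(a_m, s_1, \dots, s_{k-1})$ defined on $G_{k-1}(\mathbb{M}_{m-1} \setminus \{a_m\})$, extracting an infinite $\mathbb{M}_m \subset \mathbb{M}_{m-1}$ on which $g$ is constant, equal to some $c_m$. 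By construction, for every $(k-1)$-tuple $\overline{s}$ whose entries are drawn from $\{a_{m+1}, a_{m+2}, \dots\}$ one has $f(a_m, \overline{s}) = c_m$.

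Finally, since the colours $c_m$ take only $r$ values, the pigeonhole principle yields an infinite set of indices $j_1 < j_2 < \cdots$ with $c_{j_t} = i$ for a single fixed $i \in \{1,\dots,r\}$, and I would set $\mathbb{M} = \{a_{j_1}, a_{j_2}, \dots\}$. Any $\overline{n} \in G_k(\mathbb{M})$ has the form $(a_{j_{l_1}}, \dots, a_{j_{l_k}})$ with $l_1 < \cdots < l_k$; since $a_{j_{l_2}}, \dots, a_{j_{l_k}}$ all lie in $\mathbb{M}_{j_{l_1}}$, the defining property of $c_{j_{l_1}}$ gives $f(\overline{n}) = c_{j_{l_1}} = i$, so $\mathbb{M}$ is monochromatic as required. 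The only delicate point is the bookkeeping of the nested sets: one must guarantee that each newly extracted $\mathbb{M}_m$ sits inside its predecessor, so that every later-chosen point is a legitimate input to every earlier link colouring — this is precisely what validates the final identification $f(\overline{n}) = c_{j_{l_1}}$. There is no analytic content here; the heart of the matter is this diagonal construction combined with the reduction of arity supplied by the inductive hypothesis.
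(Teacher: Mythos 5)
Your argument is correct: it is the standard proof of the infinite Ramsey theorem by induction on the arity $k$, using the pigeonhole principle for $k=1$ and, for the inductive step, the diagonal construction of a nested chain of infinite sets $\mathbb{M}_1\supset\mathbb{M}_2\supset\cdots$ together with the link colourings $g(\overline{s})=f(a_m,\overline{s})$, followed by a final pigeonhole on the colours $c_m$. The bookkeeping you flag as the delicate point is handled correctly: since $a_{m'}\in\mathbb{M}_m$ for all $m'>m$, every $k$-tuple from $\mathbb{M}=\{a_{j_1},a_{j_2},\dots\}$ with least element $a_{j_{l_1}}$ is indeed governed by the link colouring at stage $j_{l_1}$. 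The only point worth making explicit is that the inductive hypothesis is stated for colourings of $G_{k-1}(\mathbb{N})$ but is applied to colourings of $G_{k-1}(\mathbb{M}_{m-1}\setminus\{a_m\})$; this is harmless, as one transports the statement via the unique increasing bijection of $\mathbb{N}$ onto any infinite subset. There is no comparison to draw with the paper itself: the paper states this theorem as a recalled classical result in its preliminaries and gives no proof, referring instead to the literature (Gowers' survey on Ramsey methods). Your write-up supplies exactly the proof that the paper treats as a black box.
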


\begin{Corollary}
Let $(K,d)$ be a compact metric space, $k \in \mathbb{N}$ and $f~: G_k(\mathbb{N}) \rightarrow K$. Then for every $\epsilon > 0$, there exist an infinite subset $\mathbb{M}$ of $\mathbb{N}$ such that for every $\overline{n},\overline{m} \in G_k(\mathbb{M}),~d(f(\overline{n}),f(\overline{m})) < \epsilon$.
\end{Corollary}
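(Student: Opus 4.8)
The plan is to reduce the continuous target $K$ to a finite set of ``colors'' by exploiting compactness, and then invoke the preceding Ramsey theorem directly. First I would fix $\epsilon > 0$ and use the compactness (hence total boundedness) of $(K,d)$ to extract a finite cover of $K$ by open balls $B_1, \dots, B_r$ of radius $\epsilon/2$, centered at points $c_1, \dots, c_r \in K$.

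Next I would define a coloring $g : G_k(\mathbb{N}) \to \{1, \dots, r\}$ by letting $g(\overline{n})$ be the least index $i$ for which $f(\overline{n}) \in B_i$; this is well defined precisely because the balls $B_i$ cover $K$. Applying the preceding theorem to the finitely-valued map $g$ produces an infinite subset $\mathbb{M} \subseteq \mathbb{N}$ and a single index $i_0 \in \{1,\dots,r\}$ such that $g(\overline{n}) = i_0$ for every $\overline{n} \in G_k(\mathbb{M})$; equivalently, $f(\overline{n}) \in B_{i_0}$ for all $\overline{n} \in G_k(\mathbb{M})$.

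Finally, for any $\overline{n}, \overline{m} \in G_k(\mathbb{M})$ the triangle inequality gives $d(f(\overline{n}), f(\overline{m})) \le d(f(\overline{n}), c_{i_0}) + d(c_{i_0}, f(\overline{m})) < \epsilon/2 + \epsilon/2 = \epsilon$, which is exactly the desired conclusion. I do not expect any serious obstacle here: the only step that requires genuine thought is the passage from the metric target to a finite palette of colors, and that is supplied entirely by the total boundedness of the compact space $K$. Once the coloring takes finitely many values, Ramsey's theorem does all the remaining work, and the triangle inequality converts membership in a common small ball into the uniform smallness of $d(f(\overline{n}), f(\overline{m}))$.
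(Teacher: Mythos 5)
Your proof is correct and is precisely the standard argument the paper has in mind: the paper states this as an ``immediate corollary'' of Ramsey's theorem without writing out a proof, and the reduction via total boundedness to a finite coloring followed by the triangle inequality is exactly that immediate argument.
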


\section{The main results}

Our first Lemma gives a description of approximate metric midpoints in $J_p$ that is analogous to situation in $\ell_p$ (see \cite{KR} or \cite{L1}). However, we need to use both the original and our new norm on $J_p$.

\begin{Lemma} Let $1 < p < \infty$. We denote $E_N$ the closed linear span of \{$e_i$, $i > N$\}. Let now $x,~ y \in J_p,~ \delta \in (0,~ 1)$, $u = \dfrac{x + y}{2}$ and $v = \dfrac{x - y}{2}$. Then

\noindent (i) There exists $N \in \mathbb{N}$ such that:
\begin{center} $u + \delta^{\frac{1}{p}}|v|_{J_p}B_{(E_N, |.|_{J_p})} \subset Mid_{|.|_{J_p}}(x,~ y,~ \delta)$. \end{center}

\noindent (ii) There is a compact subset $K$ of $J_p$ such that:
\begin{center} $Mid_{||.||_{J_p}}(x,~ y,~ \delta) \subset K + 2\delta^{\frac{1}{p}}||v||_{J_p}B_{(J_p, ||.||_{J_p})}$. \end{center}
\end{Lemma}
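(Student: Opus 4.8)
The plan is to parametrize both midpoint sets by the displacement from the true midpoint $u$. Writing $z=u+w$, one has $x-z=v-w$ and $y-z=-(v+w)$, and since $\|x-y\|=2\|v\|$ in either norm, the condition $z\in Mid(x,y,\delta)$ is equivalent to $\max\{|v-w|,\,|v+w|\}\le(1+\delta)|v|$ in the relevant norm. Thus (i) amounts to exhibiting admissible $w$ that keep $\max\{|v\pm w|_{J_p}\}$ small, while (ii) amounts to showing every admissible $w$ splits as a head varying in a compact set plus a small tail. One may assume $v\neq 0$, the case $v=0$ being trivial.

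For (i) I would invoke the upper $\ell_p$-estimate of the bidual norm from Corollary \ref{C1}. Since $v_{\le N}:=P_Nv\to v$ and $v_{>N}:=v-v_{\le N}\to 0$ in $\|\cdot\|_{J_p}$, hence in the equivalent norm $|\cdot|_{J_p}$, I fix $w\in E_N$ with $|w|_{J_p}\le\delta^{1/p}|v|_{J_p}$ and write $v\pm w=v_{\le N}+(v_{>N}\pm w)$, a finitely supported vector followed by a vector of $E_N$. Corollary \ref{C1} then gives
\[
|v\pm w|_{J_p}^p\le|v_{\le N}|_{J_p}^p+\big(|v_{>N}|_{J_p}+|w|_{J_p}\big)^p\xrightarrow[N\to\infty]{}|v|_{J_p}^p+|w|_{J_p}^p\le(1+\delta)|v|_{J_p}^p.
\]
As $1+\delta<(1+\delta)^p$ for $p>1$, the limit is strictly below $(1+\delta)^p|v|_{J_p}^p$, so for $N$ large enough the middle expression is $\le(1+\delta)^p|v|_{J_p}^p$, i.e. $\max\{|v\pm w|_{J_p}\}\le(1+\delta)|v|_{J_p}$, which is exactly $u+w\in Mid_{|\cdot|_{J_p}}(x,y,\delta)$.

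For (ii) I work with the original norm and exploit that the James norm satisfies the reverse, \emph{lower} $\ell_p$-estimate $\|a+b\|_{J_p}^p\ge\|a\|_{J_p}^p+\|b\|_{J_p}^p$ whenever $a\prec b$; this is elementary, obtained by concatenating a near-optimal partition for $a$ with one for $b$, the single crossing term being nonnegative. Given $z=u+w\in Mid_{\|\cdot\|_{J_p}}(x,y,\delta)$ one has $\|w\|_{J_p}\le(2+\delta)\|v\|_{J_p}$. I choose $L$ so large that $\|v-P_Lv\|_{J_p}<\eta$ and split $w=P_Lw+(I-P_L)w$. The head $P_Lw$ lies in the ball of radius $(2+\delta)\|v\|_{J_p}$ of the fixed finite-dimensional space $P_L(J_p)$, hence ranges in a compact set, and $K=u+\overline{B}_{P_L(J_p)}\big(0,(2+\delta)\|v\|_{J_p}\big)$ will serve. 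For the tail I apply the lower estimate to the splittings $v\pm w=(P_Lv+P_Lw)+\big((I-P_L)v\pm(I-P_L)w\big)$, add the two inequalities, bound the head contribution below by convexity ($\|P_Lv+s\|^p+\|P_Lv-s\|^p\ge2\|P_Lv\|^p$) and the tail contribution below by the triangle inequality, obtaining
\[
\big(\|(I-P_L)w\|_{J_p}-\eta\big)^p\le(1+\delta)^p\|v\|_{J_p}^p-\big(\|v\|_{J_p}-\eta\big)^p.
\]

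The crux is converting this into the clean bound $\|(I-P_L)w\|_{J_p}\le2\delta^{1/p}\|v\|_{J_p}$. Here I would use that $\phi(\delta)=2^p\delta-(1+\delta)^p+1$ is concave on $[0,1]$ with $\phi(0)=0$ and $\phi(1)=1$, whence $\phi(\delta)\ge\delta>0$ and so $(1+\delta)^p-1\le(2^p-1)\delta<2^p\delta$ on $(0,1)$. This strict inequality leaves room to absorb $\eta$: letting $\eta\to0$ the right-hand side above tends to $((1+\delta)^p-1)\|v\|_{J_p}^p<(2\delta^{1/p}\|v\|_{J_p})^p$, so a sufficiently small $\eta$ forces the tail below $2\delta^{1/p}\|v\|_{J_p}$, giving $z\in K+2\delta^{1/p}\|v\|_{J_p}B_{(J_p,\|\cdot\|_{J_p})}$. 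I expect this last step — extracting a small tail in the \emph{absence} of uniform convexity in $J_p$ — to be the main obstacle; it is precisely here that using \emph{both} inequalities $\|v-w\|$ and $\|v+w\|$ together is essential, a one-sided estimate being too weak to control the tail.
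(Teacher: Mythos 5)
Part (i) of your argument is correct and is essentially the paper's proof: split $v$ into a finitely supported head and a small tail, absorb $w$ into the tail, apply the upper $p$-estimate of Corollary \ref{C1} for the new norm, and let the slack between $1+\delta$ and $(1+\delta)^p$ absorb the error terms (the paper quantifies the choice of $N$ with an auxiliary parameter $\lambda$, but the mechanism is identical).

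Part (ii), however, rests on a false lemma. The ``elementary'' lower estimate $\|a+b\|_{J_p}^p\ge\|a\|_{J_p}^p+\|b\|_{J_p}^p$ for $a\prec b$ fails for the original James norm: take $a=e_1$ and $b=e_2$. Then $\|e_1\|_{J_p}^p=\|e_2\|_{J_p}^p=2$ (the optimal partition for a single spike straddles it), while $e_1+e_2$ takes only the values $0,1,1,0,0,\dots$, so any partition produces at most two unit jumps and $\|e_1+e_2\|_{J_p}^p=2$, not $\ge 4$. Your concatenation argument breaks exactly here: a near-optimal partition for $a$ must use a coordinate lying in or beyond the support of $b$, where $a+b$ no longer agrees with $a$, so the two partitions cannot be concatenated without losing the boundary jumps. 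This boundary interaction is the whole difficulty with James-type norms, and it is the reason the paper constructs the equivalent norm $|\cdot|_{J_p}$ in Section 2 in the first place (note that even the upper estimate of Lemma \ref{L1} only holds with constant $2^p+1$, not $1$). Consequently your displayed inequality controlling $\|(I-P_L)w\|_{J_p}$ is not established.

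The paper gets around this without any lower $p$-estimate for a single norm. It uses the scalar convexity inequality $\max\{|\alpha|^p,|\beta|^p\}\le\tfrac12\big(|\alpha-\beta|^p+|\alpha+\beta|^p\big)$ coordinatewise along \emph{one} combined partition, adapted to $v_N$ on the head and to $z''$ on the tail, choosing on each block whichever of $v$ or $z$ it needs. This bounds $\|v_N\|_{J_p}^p+\|z''\|_{J_p}^p$ above by $\tfrac12\big(\|v-z\|_{J_p}^p+\|v+z\|_{J_p}^p\big)$ directly, pairing the head of $v$ with the tail of $z$ inside a single average rather than splitting $\|v-z\|$ and $\|v+z\|$ separately into head and tail. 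Your final numerical step (that $(1+\delta)^p-1<2^p\delta$ on $(0,1)$ by concavity, leaving room to absorb the $\eta$-errors) is correct and coincides with the paper's, but it is applied to an inequality your argument does not deliver. To repair your proof, replace the lower $\ell_p$-estimate by the paper's pointwise $\max$-inequality along a partition adapted to $v_N$ and $z''$.
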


\begin{proof}
Fix $\lambda > 0$.\\
Let $N \in \mathbb{N}$ such that $|v - v_N|_{J_p} \leq \lambda|v|_{J_p}$ and $|v_N|_{J_p}^p \geq (1+\lambda^p)^{-1}|v|_{J_p}^p$, where $v_N = \sum_{i=1}^N v(i)e_i$.

\smallskip (i) Let now $z \in E_N$ so that $|z|_{J_p}^p \leq \delta |v|_{J_p}^p$. Then
$$|x - (u + z)|_{J_p}^p = |v - z|_{J_p}^p = |v - v_N + v_N - z|_{J_p}^p$$

\noindent It follows from the Corollary \ref{C1} that:
$$|x - (u + z)|_{J_p}^p \leq |v - v_N - z|_{J_p}^p + |v_N|_{J_p}^p\leq (|v - v_N|_{J_p} + |z|_{J_p})^p + |v|_{J_p}^p$$
Therefore
$$|x - (u + z)|_{J_p}^p \leq \big((1 + \delta^{1/p})^p+1\big)|v|_{J_p}^p \le (1+\delta)^p|v|_{J_p}^p,$$

\noindent if $\lambda$ was chosen initially small enough.

\noindent We argue similarly to show that $|y - (u + z)|_{J_p} = |v + z|_{J_p}\le (1+\delta)|v|_{J_p}$ and deduce that $u + z \in Mid(x,~ y,~ \delta)$.

\smallskip (ii) Fix $\nu>0$ and choose $N\in \mathbb N$ such that $\|v_N\|_{J_p}^p\ge (1-\nu^p)\|v\|^p$. We assume now that $u + z \in Mid_{||.||_{J_p}}(x,~ y,~ \delta)$ and write $z = z' + z''$ with $z' \in F_N = sp\{e_i, i \leq N\}$ and $z'' \in E_N$.

\noindent Since $||v - z||_{J_p}~,~ ||v + z||_{J_p} \leq (1 + \delta)||v||_{J_p}$, we get, by convexity, that $$||z'||_{J_p} \leq ||z||_{J_p} \leq (1 + \delta)||v||_{J_p}.$$

\noindent Therefore, $u + z'$ belongs to the compact set $K = u + (1 + \delta)||v||_{J_p}B_{({F_N}, ||.||_{J_p})}$.

\noindent Moreover, for any $(m,n) \in (\mathbb{N}^{*})^2$, with $m > n$:
$$\textnormal{max}\{|v(n) - v(m)|^p,~ |z(n) - z(m)|^p\} \leq \dfrac{1}{2}(|(v(n) - z(n)) - (v(m) - z(m))|^p$$

\hspace{7cm} $ + |(v(n) + z(n)) - (v(m) + z(m))|^p).$

\noindent Therefore
$$(1 - \nu^p)||v||_{J_p}^p + ||z''||_{J_p}^p \leq ||v_N||_{J_p}^p + ||z''||_{J_p}^p \leq \dfrac{1}{2}(||v - z||_{J_p}^p + ||v + z||_{J_p}^p)\leq (1 + \delta)^p||v||_{J_p}^p.$$
Then, if $\nu$ was chosen small enough, we get
$$||z''||_{J_p}^p \leq [(1 + \delta)^p - (1 - \nu^p)]||v||_{J_p}^p \leq 2^p\delta||v||_{J_p}^p.$$
\end{proof}

\begin{Proposition}\label{P3}
Let $1 < p < q < \infty$ and $f: (J_q,~|.|_{J_q}) \rightarrow (J_p,~||.||_{J_p})$ be a coarse Lipschitz embedding. Then, for any $t > 0$ and for any $\varepsilon > 0$, there exist $u \in J_q$, $\theta > t$, $N \in \mathbb{N}$ and $K$ a compact subset of $J_p$ such that
\begin{center} $f(u + \theta B_{(E_N, |.|_{J_q})}) \subset K + \varepsilon \theta B_{(J_p, ||.||_{J_p})}$. \end{center}
\end{Proposition}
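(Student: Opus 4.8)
The plan is to combine the two halves of the preceding midpoint Lemma through Proposition \ref{P2}, the point being that approximate midpoint sets are governed by the exponent $1/q$ on the domain $(J_q,|.|_{J_q})$ but by the smaller exponent $1/p$ on the target $(J_p,||.||_{J_p})$. First I would record that $f$ has $0<Lip_\infty(f)<\infty$: the upper coarse Lipschitz estimate gives $Lip_\infty(f)\le B<\infty$, while the lower estimate $A|x-y|_{J_q}\le||f(x)-f(y)||_{J_p}$ (valid for large distances) forces $Lip_s(f)\ge A$ for every $s$, hence $Lip_\infty(f)\ge A>0$. Thus Proposition \ref{P2} is available for $f$.

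Now fix $\varepsilon,t>0$. Since $Lip_\infty(f)<\infty$, choose $s_0$ so that $Lip_{s_0}(f)\le L:=Lip_\infty(f)+1$. Because $1<p<q$ we have $\frac1p-\frac1q>0$, so I can fix $\delta\in(0,1)$ small enough that $2\cdot 2^{1/p}L\,\delta^{\frac1p-\frac1q}\le\varepsilon$. With $\delta$ now fixed, apply Proposition \ref{P2} to $f$ with the error parameter set to $1$ and separation threshold $t':=\max(2t\,\delta^{-1/q},\,s_0)$, producing $x,y\in J_q$ with $|x-y|_{J_q}>t'$ and $f\big(Mid_{|.|_{J_q}}(x,y,\delta)\big)\subset Mid_{||.||_{J_p}}\big(f(x),f(y),2\delta\big)$. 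Writing $u=(x+y)/2$ and $v=(x-y)/2$, part (i) of the midpoint Lemma applied to $(J_q,|.|_{J_q})$ yields $N\in\mathbb N$ with $u+\delta^{1/q}|v|_{J_q}B_{(E_N,|.|_{J_q})}\subset Mid_{|.|_{J_q}}(x,y,\delta)$, while part (ii) applied to $(J_p,||.||_{J_p})$ at the pair $f(x),f(y)$ with error $2\delta$ yields a compact $K\subset J_p$ with $Mid_{||.||_{J_p}}(f(x),f(y),2\delta)\subset K+(2\delta)^{1/p}||f(x)-f(y)||_{J_p}\,B_{(J_p,||.||_{J_p})}$.

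Setting $\theta:=\delta^{1/q}|v|_{J_q}$, the bound $t'\ge 2t\,\delta^{-1/q}$ gives $\theta=\delta^{1/q}|x-y|_{J_q}/2>t$, and chaining the three inclusions produces $f\big(u+\theta B_{(E_N,|.|_{J_q})}\big)\subset K+(2\delta)^{1/p}||f(x)-f(y)||_{J_p}\,B_{(J_p,||.||_{J_p})}$. It then remains only to see that the target radius is at most $\varepsilon\theta$: since $t'\ge s_0$, the coarse Lipschitz estimate gives $||f(x)-f(y)||_{J_p}\le L|x-y|_{J_q}=2L|v|_{J_q}$, so $(2\delta)^{1/p}||f(x)-f(y)||_{J_p}\le 2\cdot 2^{1/p}L\,\delta^{1/p}|v|_{J_q}=2\cdot 2^{1/p}L\,\delta^{\frac1p-\frac1q}\,\theta\le\varepsilon\theta$ by the choice of $\delta$, and taking $u,\theta,N,K$ as above finishes the proof. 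The single load-bearing point, and the only place where the hypothesis $p<q$ enters, is the factor $\delta^{\frac1p-\frac1q}\to 0$ as $\delta\to 0$: the domain norm $|.|_{J_q}$ swells the midpoint sets with exponent $1/q$ whereas the target norm $||.||_{J_p}$ confines the image midpoints with the smaller exponent $1/p$, and these combine favorably exactly when $1/p>1/q$. The only remaining care is organizational, namely fixing $s_0$ (hence $L$) before $\delta$, and $\delta$ before the threshold $t'$ handed to Proposition \ref{P2}, so that all the estimates stay consistent.
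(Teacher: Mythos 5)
Your proof is correct and takes essentially the same route as the paper: Proposition \ref{P2} combined with parts (i) and (ii) of the midpoint Lemma, with the decisive factor $\delta^{\frac1p-\frac1q}\to 0$ coming from $p<q$. The only differences are cosmetic — you use $L=Lip_{\infty}(f)+1$ where the paper uses $2\,Lip_{\infty}(f)$, and you make explicit the order in which $s_0$, $\delta$ and the separation threshold are chosen, which the paper leaves as ``to be detailed later.''
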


\begin{proof} If $Lip_{\infty}(f) = 0$, the conclusion is clear. So we assume that $Lip_{\infty}(f) > 0$.

\noindent We choose a small $\delta > 0$ (to be detailed later). Then we choose $s$ large enough so that $Lip_{s}(f) \leq 2 Lip_{\infty}(f)$.\\ Then, by Proposition \ref{P2},
\begin{center} $\exists~x,~y \in J_q,~ |x - y|_{J_q} \geq s$ ~and~ $f(Mid_{|.|_{J_q}}(x,~y,~\delta)) \subset Mid_{||.||_{J_p}}(f(x),~f(y),~2\delta)$. \end{center}

\noindent Denote $u = \dfrac{x + y}{2}$, $v = \dfrac{x - y}{2}$ and $\theta = \delta^{\frac{1}{q}} |v|_{J_q}$. By Lemma 3.1, there exists $N \in \mathbb{N}$ such that $u + \theta B_{(E_N, |.|_{J_q})} \subset Mid_{|.|_{J_q}}(x,~y,~\delta)$ and there exists a compact subset $K$ of $J_p$ so that $Mid_{||.||_{J_p}}(f(x),~f(y),~2\delta) \subset K + 2(2\delta)^{\frac{1}{p}}||f(x) - f(y)||_{J_p}B_{(J_p, ||.||_{J_p})}$. But~:
$$2(2\delta)^{\frac{1}{p}}||f(x) - f(y)||_{J_p} \leq 4 Lip_{\infty}(f)(2\delta)^{\frac{1}{p}}|x - y|_{J_q}$$

\hspace{6.1cm} $\leq 8 Lip_{\infty}(f)2^{\frac{1}{p}}\delta^{\frac{1}{p} - \frac{1}{q}}\theta \leq \varepsilon\theta$,

\noindent if $\delta$ was chosen initially small enough.

\noindent Then an appropriate choice of a large $s$ will ensure that $\theta \geq \dfrac{1}{2}\delta^{\frac{1}{q}}s > t$. This finishes the proof.
\end{proof}

\begin{Corollary}\label{C2}
Let $1 < p < q < \infty$.

\noindent Then $J_q$ does not coarse Lipschitz embed into $J_p$.
\end{Corollary}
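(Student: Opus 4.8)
The plan is to argue by contradiction and to feed the splitting provided by Proposition \ref{P3} into a compactness-and-pigeonhole argument, exactly as in the $\ell_p$ setting of Kalton--Randrianarivony. Suppose, for contradiction, that some map $g\colon J_q\to J_p$ is a coarse Lipschitz embedding for the original norms. Since $|.|_{J_q}$ is equivalent to $\|.\|_{J_q}$ (the construction preceding Proposition \ref{P0} applies verbatim with index $q$) and $\|.\|_{J_p}$ is already the norm on the target, $g$ remains a coarse Lipschitz embedding when viewed as $f\colon (J_q,|.|_{J_q})\to (J_p,\|.\|_{J_p})$; in particular there are constants $A>0$ and $s_0>0$ with $\|f(a)-f(b)\|_{J_p}\ge A\,|a-b|_{J_q}$ whenever $|a-b|_{J_q}\ge s_0$, and hence $Lip_\infty(f)\ge A>0$. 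I would then fix $\varepsilon$ with $0<\varepsilon<A/8$ and take $t\ge 2s_0$, and apply Proposition \ref{P3} to produce $u\in J_q$, $\theta>t$, $N\in\mathbb N$ and a compact $K\subset J_p$ with $f\big(u+\theta B_{(E_N,|.|_{J_q})}\big)\subset K+\varepsilon\theta B_{(J_p,\|.\|_{J_p})}$.

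Next I would exploit that $E_N$, being the closed span of $\{e_i:i>N\}$, is infinite dimensional, so its unit ball $B_{(E_N,|.|_{J_q})}$ is non-compact. By Riesz's lemma one extracts a sequence $(w_k)_{k\ge1}$ in this unit ball with $|w_k-w_l|_{J_q}\ge \tfrac12$ for all $k\ne l$. Setting $z_k=u+\theta w_k$, each $z_k$ lies in $u+\theta B_{(E_N,|.|_{J_q})}$ and $|z_k-z_l|_{J_q}=\theta\,|w_k-w_l|_{J_q}\ge \theta/2$. Because $\theta>t\ge 2s_0$ we have $\theta/2>s_0$, so the lower estimate of the embedding applies and yields $\|f(z_k)-f(z_l)\|_{J_p}\ge A\,|z_k-z_l|_{J_q}\ge A\theta/2$ for all $k\ne l$.

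Finally I would compare this uniform separation with the covering coming from Proposition \ref{P3}. Writing $f(z_k)=c_k+r_k$ with $c_k\in K$ and $\|r_k\|_{J_p}\le\varepsilon\theta$, I cover the compact set $K$ by finitely many balls of radius $\varepsilon\theta$; by the pigeonhole principle infinitely many of the points $c_k$ land in a single such ball, so there are indices $k\ne l$ with $\|c_k-c_l\|_{J_p}\le 2\varepsilon\theta$, whence $\|f(z_k)-f(z_l)\|_{J_p}\le \|c_k-c_l\|_{J_p}+\|r_k\|_{J_p}+\|r_l\|_{J_p}\le 4\varepsilon\theta$. Combined with the lower bound this gives $A\theta/2\le 4\varepsilon\theta$, i.e. $\varepsilon\ge A/8$, contradicting $\varepsilon<A/8$; hence no coarse Lipschitz embedding $J_q\to J_p$ exists. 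The conceptual work has already been carried out in Lemma 3.1 and Proposition \ref{P3}, so the remaining argument is essentially bookkeeping; the one point genuinely worth care is that the separated sequence must be produced inside the tail space $E_N$ (so that the $z_k$ really lie in the set controlled by Proposition \ref{P3}), together with the choice of $t$ large enough to clear the threshold $s_0$ while keeping $\varepsilon$ below $A/8$.
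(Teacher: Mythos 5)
Your proposal is correct and follows essentially the same route as the paper: a contradiction argument that feeds a separated sequence in $u+\theta B_{(E_N,|.|_{J_q})}$ into the covering $K+\varepsilon\theta B_{(J_p,\|\cdot\|_{J_p})}$ from Proposition \ref{P3}, then uses compactness of $K$ to find two images too close together to be compatible with the lower coarse Lipschitz estimate. The only differences are cosmetic (Riesz's lemma with separation $\tfrac12$ and an explicit threshold $\varepsilon<A/8$, versus the paper's $\theta$-separated sequence and ``let $\varepsilon\to0$, $\theta\to\infty$'').
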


\begin{proof} We proceed by contradiction and suppose that there exists a coarse Lipschitz embedding $f~: (J_q,~|.|_{J_q}) \rightarrow (J_p,~||.||_{J_p})$.

\noindent With the notation of the previous proposition, we can find a sequence $(u_n)_{n=1}^{\infty}$ in $u + \theta B_{(E_N, |.|_{J_q})}$, such that $|u_n - u_m|_{J_q} \geq \theta$ for $n \neq m$. Then $f(u_n) = k_n + \varepsilon\theta v_n$, with $k_n \in K$ et $v_n \in B_{(J_p, ||.||_{J_p})}$. Since $K$ is compact, by extracting a subsequence, we may assume that $||f(u_n) - f(u_m)||_{J_p} \leq 3\varepsilon\theta$.

\noindent Since $\varepsilon$ can be chosen arbitrarily small and $\theta$ arbitrarily large, this yields a contradiction.
\end{proof}

In order to treat the coarse Lipschitz embeddability in the other direction, we shall use the Kalton-Randrianarivony graphs and some special subsets of them that we introduce now.

\begin{Definition}
Let $\overline{n}$, $\overline{m} \in G_k(\mathbb{M})$ (where $\mathbb{M}$ is an infinite subset of $\mathbb{N}$).

\noindent We say that $(\overline{n},~ \overline{m}) \in I_k(\mathbb{M})$ if $n_1 < m_1 < n_2 < m_2 < ... < n_k < m_k$.
\end{Definition}

\begin{Proposition}\label{P4}
Let $\varepsilon > 0$ and $f~:~ G_k(\mathbb{N}) \rightarrow (J_p^{**},|\ |_{J_p^{**}})$ be a Lipschitz map.

\noindent Then, for any infinite subset $\mathbb{M}$ of $\mathbb{N}$, there exists $(\overline{n},~\overline{m}) \in I_k(\mathbb{M})$ such that $$|f(\overline{n}) - f(\overline{m})|_{J_p^{**}} \leq 2 Lip(f)k^{\frac{1}{p}} + \varepsilon$$
\end{Proposition}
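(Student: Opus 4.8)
The plan is to fix an interleaved pair $(\overline n,\overline m)\in I_k(\mathbb M)$ and telescope $f(\overline n)-f(\overline m)$ along the intermediate tuples $\overline w^{(j)}=(m_1,\dots,m_j,n_{j+1},\dots,n_k)$, so that $\overline w^{(0)}=\overline n$, $\overline w^{(k)}=\overline m$ and $f(\overline n)-f(\overline m)=\sum_{j=1}^k d_j$ with $d_j=f(\overline w^{(j-1)})-f(\overline w^{(j)})$. The interleaving $n_1<m_1<\dots<n_k<m_k$ is exactly what guarantees that every $\overline w^{(j)}$ lies in $G_k$, and since consecutive $\overline w^{(j-1)},\overline w^{(j)}$ differ in a single coordinate we get $|d_j|_{J_p^{**}}\le L$ with $L=\mathrm{Lip}(f)$. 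If the $d_j$ were successively supported with respect to the basis, iterating Corollary \ref{C1} would immediately give $|f(\overline n)-f(\overline m)|_{J_p^{**}}^p\le\sum_j|d_j|_{J_p^{**}}^p\le kL^p$, which is the desired bound (the constant $2$ will comfortably absorb the approximation errors below). So the whole problem is to pass, after shrinking $\mathbb M$, to a situation where the increments are essentially successive blocks.

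To control the ``heads'' I would use weak$^*$ compactness: since $(e_n^*)$ is a basis of $J_p^*$, the predual is separable, bounded subsets of $J_p^{**}$ are weak$^*$ metrizable and sequentially compact, and $f(G_k(\mathbb N))$ is bounded because $G_k$ has diameter $\le k$. A diagonal argument then lets me pass to an infinite $\mathbb M_0\subseteq\mathbb M$ on which all iterated weak$^*$ limits
$$\phi_j(a_1,\dots,a_j)=\text{w*-}\lim_{a_{j+1}}\cdots\text{w*-}\lim_{a_k}f(a_1,\dots,a_k)$$
exist; each $\phi_j$ is again $L$-Lipschitz by weak$^*$ lower semicontinuity of the norm, and $\phi_k=f$. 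The point of the $\phi_j$ is that the reference increments $D_j=\phi_j(m_1,\dots,m_{j-1},n_j)-\phi_j(m_1,\dots,m_{j-1},m_j)$ depend only on the first $j$ coordinates, still satisfy $|D_j|_{J_p^{**}}\le L$, and have weak$^*$-small head: since $\phi_{j-1}(m_1,\dots,m_{j-1})$ is the weak$^*$ limit of $\phi_j(m_1,\dots,m_{j-1},\cdot)$ and each canonical projection $P_M$ onto the first $M$ basis vectors is a weak$^*$ continuous finite-rank map (the adjoint of a projection on $J_p^*$), we have $P_M D_j\to0$ as $n_j,m_j\to\infty$. Choosing the interleaved coordinates sufficiently spread out, and then the later coordinates large enough that the genuine increments satisfy $P_M d_j\approx P_M D_j$, makes $d_j$ have negligible content below a prescribed level $M_{j-1}$.

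The main obstacle is the lack of reflexivity: the ``tail'' of a general element of $J_p^{**}$ need not tend to $0$ in norm, because of the one extra dimension $J_p^{**}/J_p$, so I cannot directly truncate the $D_j$ into finite blocks. This is where the compactness corollary of Ramsey's theorem enters. Writing $\pi\colon J_p^{**}\to J_p^{**}/J_p\cong\mathbb R$ for the quotient functional, each map $\pi\circ\phi_j\colon G_j(\mathbb N)\to\mathbb R$ takes values in a bounded interval, so after finitely many applications of that corollary I may shrink $\mathbb M_0$ to $\mathbb M_1$ on which every $\pi\circ\phi_j$ is nearly constant; consequently $\pi(D_j)\approx0$, i.e. each $D_j$ lies within $\varepsilon$ of $J_p$, where tails do vanish. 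Now I run a finite forward induction on $\mathbb M_1$ choosing $n_1<m_1<\dots<n_k<m_k$ together with levels $0=M_0<M_1<\dots<M_{k-1}$: at stage $j$, knowing $M_{j-1}$, I pick $n_j,m_j$ large enough to kill the head of $D_j$ at level $M_{j-1}$, then pick $M_j$ large enough to capture the (now almost $J_p$) tail of $D_j$, and finally I make the later coordinates large enough that $d_i$ matches $D_i$ on all the relevant projections. Setting $\tilde d_j=(P_{M_j}-P_{M_{j-1}})\bigl(f(\overline n)-f(\overline m)\bigr)$ for $j<k$ and $\tilde d_k=(I-P_{M_{k-1}})\bigl(f(\overline n)-f(\overline m)\bigr)$ gives an exact decomposition into successive blocks, the first $k-1$ finitely supported and the last one free to live in $J_p^{**}$; the head, tail and extra-dimension estimates, together with monotonicity of the basis projections ($\|P_a x\|\le\|P_b x\|$ for $a\le b$), show $|\tilde d_j|_{J_p^{**}}\le L+\varepsilon'$, and a final application of Corollary \ref{C1} yields $|f(\overline n)-f(\overline m)|_{J_p^{**}}\le Lk^{1/p}+\varepsilon\le 2\,\mathrm{Lip}(f)\,k^{1/p}+\varepsilon$. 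I expect the delicate point to be precisely this bookkeeping: the head of $d_j$ requires the coordinates to be chosen large after the level $M_{j-1}$ is fixed, whereas its tail requires the level $M_j$ to be fixed after the increment is known, and the device that breaks this circularity is to run all the estimates through the early-coordinate references $D_j$ rather than the genuine increments $d_j$.
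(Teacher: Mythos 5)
Your toolkit (iterated weak$^*$ limits, Ramsey to control the $J_p^{**}/J_p$ component, blocking along the basis, Corollary \ref{C1}) is the right one, but the key estimate $|\tilde d_j|_{J_p^{**}}\le L+\varepsilon'$ does not follow from your construction, and the discrepancy is not an ``approximation error that the constant $2$ comfortably absorbs'': it is exactly where the constant $2$ comes from. The block $\tilde d_j=(P_{M_j}-P_{M_{j-1}})\sum_i d_i$ is \emph{not} approximately $(P_{M_j}-P_{M_{j-1}})d_j$. For $i<j$ the differences $d_i-D_i$ are only weak$^*$ small, not norm small; writing them as sums of the successive convergence errors $\phi_{l+1}(\vec b,n_{l+1})-\phi_l(\vec b)$ and summing over $i<j$, the level-$(j-1)$ contributions telescope (the relevant prefixes satisfy $\vec b_B^{\,i}=\vec b_A^{\,i+1}$) to
$$T_j=\bigl[\phi_j(n_1,\dots,n_j)-\phi_{j-1}(n_1,\dots,n_{j-1})\bigr]-\bigl[\phi_j(m_1,\dots,m_{j-1},n_j)-\phi_{j-1}(m_1,\dots,m_{j-1})\bigr],$$
a term which is essentially supported in the very interval $(M_{j-1},M_j]$ of your $j$-th block (its head is small because $n_j$ is chosen large, its tail is captured by $M_j$, and its component along the constant sequence $(1,1,\dots)$ is small by your Ramsey step), and whose norm is only bounded by $2L$ via weak$^*$ lower semicontinuity --- there is no reason for it to be small. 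So the honest estimate is $|\tilde d_j|\le |D_j+T_j|+\varepsilon'$ with
$$D_j+T_j=\phi_j(\vec n_{\le j})-\phi_{j-1}(\vec n_{<j})+\phi_{j-1}(\vec m_{<j})-\phi_j(\vec m_{<j},m_j),$$
which gives $|\tilde d_j|\le 2L+\varepsilon'$, not $L+\varepsilon'$. With that correction your blocking argument does yield $2\,\mathrm{Lip}(f)\,k^{1/p}+\varepsilon$, so the scheme is repairable, but the intermediate claim $|f(\overline n)-f(\overline m)|\le Lk^{1/p}+\varepsilon$ is not obtainable from it.

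Once corrected, $D_j+T_j$ is precisely the quantity $u_{\overline n,\overline m,t,l}=f(\overline n,t)-g(\overline n)+g(\overline m)-f(\overline m,l)$ on which the paper's proof runs, and whose norm bound $2\,\mathrm{Lip}(f)$ is the true source of the factor $2$ in the statement. The paper avoids your multi-level bookkeeping by inducting on $k$: it peels off only the last coordinate, applies Corollary \ref{C1} once per step to the pair consisting of $v(\overline n)-v(\overline m)\in J_p$ (almost finitely supported) and $u_{\overline n,\overline m,t,l}$ (weak$^*$ far out), and lets the induction hypothesis absorb the first $k-1$ blocks; this trades your global decomposition for a one-block-at-a-time argument in which the errors only accumulate additively. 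If you keep your one-pass version, you should also justify that interval projections are contractions for $|\cdot|_{J_p^{**}}$ (any successive decomposition of a restriction of $x^*$ extends to one of $x^*$, so interval restrictions are $|\cdot|_{J_p^*}$-contractions and one dualizes), since your argument uses this repeatedly and the paper never states it.
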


\begin{proof}
We shall prove this statement by induction on $k \in \mathbb{N}$

\noindent The proposition is clearly true for $k = 1$.

\noindent Assume now that it is true for $k-1 \geq 1$.

\noindent Let $f~:~G_k(\mathbb{M}) \rightarrow J_p^{**}$ be a Lipschitz map and $\varepsilon > 0$.

\noindent Let $\eta > 0$ (small enough~: to be detailed later).

\noindent By a diagonal extraction process and thanks to the weak$^*$-compactness, we can find an infinite subset $\mathbb{M}_1$ of $\mathbb{N}$ such that
\begin{equation}\label{cv}  \forall~ \overline{n} \in G_{k-1}(\mathbb{N}),~w^{*}- \underset{k \in \mathbb{M}_1}{lim}~f(\overline{n},k) = g(\overline{n}) \in J_p^{**}
\end{equation}

\noindent Then $Lip(g) \leq Lip(f)$, by weak$^*$-lower semicontinuity of $|.|_{J_p^{**}}$.

\noindent Denote $g(\overline{n}) = v(\overline{n}) + c_{\overline{n}}\textrm{1\kern-0.25emI}$ (where $v(\overline{n}) \in J_p$, $c_{\overline{n}} \in \mathbb{R}$ and $\textrm{1\kern-0.25emI}$ is the constant sequence $(1,1,1,...)$).

\noindent By Ramsey's theorem, there exists an infinite subset $\mathbb{M}_2$ of $\mathbb{M}_1$ such that
\begin{equation}\label{2}  \forall~ \overline{n},\overline{m} \in G_{k-1}(\mathbb{M}_2)\ \ \  |c_{\overline{n}} - c_{\overline{m}}| =|(v(\overline{n}) - v(\overline{m})) - (g(\overline{n}) - g(\overline{m}))|_{J_p^{**}}< \eta.
\end{equation}

\noindent For $\overline{n},\overline{m} \in G_{k-1}(\mathbb{M}_2)$ and $t,l\in \mathbb{M}_2$, set
$$u_{\overline{n},\overline{m},t,l} = f(\overline{n},t) - g(\overline{n}) + g(\overline{m}) - f(\overline{m},l).$$
Using (\ref{cv}) and Corollary \ref{C1} we deduce that there exists $ l_0 \in \mathbb{N}$ such that for all $t,l \in \mathbb{M}_2 \cap [l_0,~+\infty[$:

$$|v(\overline{n})-v(\overline{m})+u_{\overline{n},\overline{m},t,l}|^p\le |v(\overline{n})-v(\overline{m})|^p+|u_{\overline{n},\overline{m},t,l}|^p+\eta.$$
Then it follows from (\ref{2}) that for all $t,l \in \mathbb{M}_2 \cap [l_0,~+\infty[$:
$$|f(\overline{n},t) - f(\overline{m},l)|_{J_p^{**}}^p \leq |u_{\overline{n},\overline{m},t,l}|_{J_p^{**}}^p + (|v(\overline{n}) - v(\overline{m})|_{J_p} + \eta)^p + \eta.$$
Moreover~: $f(\overline{n},t) - g(\overline{n}) = w^*-\underset{i}{lim}(f(\overline{n},t)-f(\overline{n},i))$.

\noindent Therefore, by weak$^*$-lower semicontinuity of $|.|_{J_p^{**}}$~: $|f(\overline{n},t) - g(\overline{n})|_{J_p^{**}} \leq Lip(f)$.

\noindent Likewise~: $|f(\overline{m},l) - g(\overline{m})|_{J_p^{**}} \leq Lip(f)$.

\noindent Then, we deduce the following inequality~: $|u_{\overline{n},\overline{m},t,l}|_{J_p^{**}}^p \leq 2^p Lip(f)^p$.

\noindent On the other hand, it follows from our induction hypothesis that:
$$\exists~ (\overline{n},~\overline{m}) \in~(I_{k-1}(\mathbb{M}_2))^2,~ |g(\overline{n}) - g(\overline{m})|_{J_p^{**}} \leq 2Lip(f)(k-1)^{\frac{1}{p}} + \eta.$$

\noindent Then, for $t,~l \in \mathbb{M}_2 \cap [l_0,~+\infty[$ such that $m_{k-1} < t < l$, we have $((\overline{n},t),(\overline{m},l)) \in I_k(\mathbb{M}_2)$, and 
$$|f(\overline{n},t) - f(\overline{m},l)|_{J_p^{**}}^p \leq 2^pLip(f)^p + (2Lip(f)(k-1)^{\frac{1}{p}} + 2\eta)^p + \eta.$$

\noindent So~: $$|f(\overline{n},t) - f(\overline{m},l)|_{J_p^{**}}^p \leq 2^pLip(f)^pk + \varphi(\eta), \textnormal{~with~}\varphi(\eta) \underset{\eta \to 0}{\rightarrow} 0.$$

\noindent Thus, if $\eta$ was chosen small enough~: $$|f(\overline{n},t) - f(\overline{m},l)|_{J_p^{**}} \leq 2 Lip(f)k^{1/p} + \varepsilon.$$

\noindent This finishes our inductive proof.
\end{proof}

\begin{Corollary}\label{C3}
Let $1 < q < p < \infty$.

\noindent Then $J_q$ does not coarse Lipschitz embed into $J_p$.
\end{Corollary}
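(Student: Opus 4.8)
The plan is to run the Kalton--Randrianarivony graph argument, now using Proposition \ref{P4} as an asymptotic upper estimate for the target $J_p$, and to reduce the whole matter to the comparison of the two exponents $1/q$ and $1/p$. Since coarse Lipschitz embeddability is unchanged if we replace a norm by an equivalent one, I argue by contradiction and assume that $f\colon J_q \to J_p$ is a coarse Lipschitz embedding: there are $A,B,\theta > 0$ with $A||x - y||_{J_q} \le ||f(x) - f(y)||_{J_p} \le B||x - y||_{J_q}$ whenever $||x - y||_{J_q} \ge \theta$. The test maps are $\phi\colon G_k(\mathbb{N}) \to J_q$, $\phi(\overline n) = \sum_{j=1}^{k} e_{n_j}$, one for each $k$.

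Everything hinges on two estimates for $\phi$, both uniform in $k$. For the upper one, note that any finitely supported $z$ satisfies $||z||_{J_q} \le 2||z||_{\ell_q}$, because each consecutive difference is bounded by $2^{q-1}(|z(p_{i+1})|^q + |z(p_i)|^q)$ and each index is used at most twice; since $\phi(\overline n) - \phi(\overline m)$ has at most $2\,d(\overline n,\overline m)$ nonzero entries, all equal to $\pm 1$, this yields $||\phi(\overline n) - \phi(\overline m)||_{J_q} \le 2^{1+1/q}d(\overline n,\overline m)$, so $Lip(\phi) \le 2^{1+1/q}$. For the lower one, I only need pairs $(\overline n,\overline m) \in I_k(\mathbb{N})$: the interleaving $n_1 < m_1 < \dots < n_k < m_k$ makes $\phi(\overline n) - \phi(\overline m)$ alternate $+1,-1,+1,\dots$ along $2k$ consecutive prescribed indices, and selecting exactly these indices in the supremum defining the James norm produces $2k - 1$ jumps of size $2$, whence $||\phi(\overline n) - \phi(\overline m)||_{J_q} \ge 2k^{1/q}$.

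I then fix $t \ge \theta$ and set $h = f \circ (t\phi)$. As $\phi(\overline n) - \phi(\overline m)$ is a nonzero $\{-1,0,1\}$-valued finitely supported sequence, $||\phi(\overline n) - \phi(\overline m)||_{J_q} \ge 1$, so $t\phi$ sends any two distinct vertices to $J_q$-distance $\ge \theta$, and the coarse Lipschitz inequalities apply to every pair. The upper inequality gives $Lip(h) \le Bt\,Lip(\phi)$ with respect to $||.||_{J_p}$; since $|.|_{J_p} \le ||.||_{J_p}$ and $J_p$ sits isometrically in $(J_p^{**},|.|_{J_p^{**}})$ (Proposition \ref{P0}), $h$ is a Lipschitz map into $(J_p^{**},|.|_{J_p^{**}})$ with the same bound. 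Proposition \ref{P4} then furnishes $(\overline n,\overline m) \in I_k(\mathbb{N})$ with $|h(\overline n) - h(\overline m)|_{J_p^{**}} \le 2Bt\,Lip(\phi)\,k^{1/p} + 1$. For that same interleaved pair the lower coarse Lipschitz inequality, combined with the equivalence $|.|_{J_p} \ge c'||.||_{J_p}$ from Proposition \ref{P0}, gives $|h(\overline n) - h(\overline m)|_{J_p^{**}} \ge 2c'At\,k^{1/q}$. Dividing $2c'At\,k^{1/q} \le 2Bt\,Lip(\phi)\,k^{1/p} + 1$ by $t$ and letting $k \to \infty$ contradicts $1/q > 1/p$, which proves the Corollary.

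The genuinely delicate point is the lower estimate for interleaved pairs: a priori the James norm can be far smaller than the $\ell_q$ norm (a long constant block has James norm of order one), so one must check that the specific alternating pattern forced by $I_k(\mathbb{N})$ keeps the James variation of full order $k^{1/q}$. The remaining care is bookkeeping, namely verifying that $h$ is honestly Lipschitz into the bidual so that Proposition \ref{P4} applies; this is exactly where the isometric inclusion $J_p \hookrightarrow J_p^{**}$ and the two-sided norm equivalence of Proposition \ref{P0} enter, the extra dimension of $J_p^{**}$ having already been absorbed in the proof of Proposition \ref{P4}.
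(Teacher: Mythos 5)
Your proposal is correct and follows essentially the same route as the paper: the Kalton--Randrianarivony graph map $\overline n\mapsto e_{n_1}+\dots+e_{n_k}$, the upper bound $O(k^{1/p})$ from Proposition \ref{P4} applied to the composition with the embedding, and the lower bound $2k^{1/q}$ on interleaved pairs coming from the alternating $\pm1$ pattern, yielding a contradiction as $k\to\infty$. The only cosmetic difference is that you scale the test map by $t\ge\theta$ and keep the constants $A,B$ explicit, whereas the paper rescales $f$ once at the outset; your Lipschitz estimate for $\phi$ via the $\ell_q$ comparison is also slightly sharper than the paper's, but nothing essential changes.
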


\begin{proof}
Suppose that $g~: J_q \rightarrow J_p$ is a map such that there exist $\theta$, $A$ and $B$ real positive numbers such that~:
$$\forall x,y \in J_q\ \  ||x - y||_{J_q} \geq \theta \Rightarrow A||x - y||_{J_q} \leq |g(x) - g(y)|_{J_p} \leq B||x - y||_{J_q}.$$

\noindent Let us rescale by defining  $f(v)={A\theta}^{-1}g(\theta v)$, for $v \in J_q$. We have that there exists $C\ge 1$ such that

\begin{equation}\label{3}
\forall x,y \in J_q, ||x - y||_{J_q} \geq 1 \Rightarrow ||x - y||_{J_q} \leq |f(x) - f(y)|_{J_p} \leq C||x - y||_{J_q}.
\end{equation}

\noindent We still denote $(e_n)_{n=1}^{\infty}$ the canonical basis of $J_q$.

\noindent Consider the map $\varphi~: G_{k}(\mathbb{N}) \rightarrow (J_q,\|\ \|_{J_q})$ defined by $\varphi(\overline{n}) = e_{n_1} + ... + e_{n_k}$.\\
Note that $Lip(\varphi) \leq 4$. Since $\|\varphi(\overline{n}) - \varphi(\overline{m})\|_{J_q} \geq 1$ whenever $\overline{n} \neq \overline{m}$, we have that
$Lip(f \circ \varphi) \leq 4C$, where $f \circ \varphi$ is considered as a map from $G_{k}(\mathbb{N})$ to $(J_p,|\ |_{J_p})$. It then follows from Proposition \ref{P4} that there exist $(\overline{n},\overline{m}) \in I_{k}(\mathbb{N})$  such that:
$$|(f \circ \varphi)(\overline{n}) - (f \circ \varphi)(\overline{m})|_{J_p} \leq 9Ck^{\frac{1}{p}}.$$

\noindent On the other hand, since $(\overline{n},\overline{m}) \in I_k(\mathbb{N})$, we have that 
$$||\varphi(\overline{n}) - \varphi(\overline{m})||_{J_q} \geq 2(2k-1)^{\frac{1}{q}}\ge 2k^{\frac{1}{q}}.$$

\noindent This is in contradiction with (\ref{3}), for $k$ large enough.

\noindent Therefore, there is no coarse Lipschitz embedding from $J_q$ into $J_p$.
\end{proof}

\begin{Corollary}
Let $1 < p < q < \infty$, and $r > 1$ such that $r \notin \{p,~q\}$.

\noindent Then $J_r$ does not coarse Lipschitz embed into $J_p \oplus J_q$.
\end{Corollary}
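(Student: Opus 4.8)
The plan is to split into three cases according to the position of $r$ relative to the pair $p<q$, and in each case to pit the two techniques already at our disposal---the midpoint/ball argument behind Proposition \ref{P3} and the Ramsey/graph argument behind Proposition \ref{P4}---against one another. Throughout I equip $J_p\oplus J_q$ with the $\ell_q$-sum of $\|\cdot\|_{J_p}$ and $|\cdot|_{J_q}$ (any equivalent norm is admissible for coarse Lipschitz questions), write $f=(f_1,f_2)$ for the components of a hypothetical coarse Lipschitz embedding, and, after rescaling exactly as in Corollary \ref{C3}, assume $\|x-y\|_{J_r}\ge 1\Rightarrow \|x-y\|_{J_r}\le\|f(x)-f(y)\|\le C\|x-y\|_{J_r}$.

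When $r>q$ (so $r$ exceeds both exponents) I argue by midpoints alone. With the $\ell_q$-sum the target $J_p\oplus J_q$ is weak$^*$ asymptotically uniformly convex of power type $q$, so the midpoint description used in Proposition \ref{P3} extends with range exponent $q$: approximate midpoints lie in a compact set plus a ball of radius $\asymp\delta^{1/q}\|v\|$, whence a large ball $u+\theta B_{(E_N,|\cdot|_{J_r})}$ is carried by $f$ into $K+\varepsilon\theta B$ for a compact $K\subset J_p\oplus J_q$, since $\delta^{1/q}\cdot\theta\delta^{-1/r}=\theta\delta^{1/q-1/r}\to 0$ as $1/q-1/r>0$. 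The contradiction is then that of Corollary \ref{C2}. Symmetrically, when $r<p$ (so $r$ lies below both exponents) I argue by graphs alone: Proposition \ref{P4} extends to maps into $J_p^{**}\oplus J_q^{**}$ (the induction is unchanged---one stabilises both scalar coordinates $c_{\overline{n}}$ by Ramsey and invokes Corollary \ref{C1} in each factor), yielding an interlacing pair with $|f(\overline{n})-f(\overline{m})|\le 2\,Lip(f)\,k^{1/p}+\varepsilon$ because $\max(k^{1/p},k^{1/q})=k^{1/p}$. Testing on $\varphi(\overline{n})=e_{n_1}+\cdots+e_{n_k}$, whose interlacing separation in $J_r$ is $\gtrsim k^{1/r}$, contradicts this for large $k$ since $1/r>1/p$.

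The middle case $p<r<q$ is the crux, and is where the two methods must genuinely cooperate on a single object. Here $r>p$ lets me apply Proposition \ref{P3} to the component $f_1\colon J_r\to J_p$ by itself (its conclusion only needs $f_1$ to be coarse Lipschitz, not an embedding): for any large $t$ there are $u$, $\theta>t$, $N$ and a compact $K_1\subset J_p$ with $f_1\big(u+\theta B_{(E_N,|\cdot|_{J_r})}\big)\subset K_1+\varepsilon\theta B_{J_p}$. Inside this ball I plant a rescaled James graph $\psi(\overline{n})=\rho\sum_{i=1}^k e_{n_i}$ with $n_i>N$ and $\rho\asymp\theta/k^{1/r}$, so that $\psi(G_k)\subset\theta B_{(E_N,|\cdot|_{J_r})}$ while $\|\psi(\overline{n})-\psi(\overline{m})\|_{J_r}\asymp\theta$ on interlacing pairs. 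Feeding the $K_1$-part of $f_1\circ(u+\psi)$ into the compact-valued corollary of Ramsey's theorem, I pass to an infinite $\mathbb{M}$ on which $f_1\circ(u+\psi)$ oscillates by at most $3\varepsilon\theta$; since $f$ is an embedding, the separation must be carried by $f_2$, giving $\big|f_2(u+\psi(\overline{n}))-f_2(u+\psi(\overline{m}))\big|_{J_q}\gtrsim\theta$ for every interlacing pair of $G_k(\mathbb{M})$.

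Now the graph method disposes of the remaining factor. A one-coordinate change moves $\psi$ by $\asymp\rho$ in $J_r$, so $f_2\circ(u+\psi)\colon G_k(\mathbb{M})\to J_q\subset J_q^{**}$ has graph-Lipschitz constant $\lesssim C\theta/k^{1/r}$, and Proposition \ref{P4}---applicable precisely because $r<q$---produces an interlacing pair with $\big|f_2(u+\psi(\overline{n}))-f_2(u+\psi(\overline{m}))\big|_{J_q}\lesssim \theta\,k^{1/q-1/r}+\varepsilon'$. Fixing $\varepsilon$ small, then $k$ large so that $k^{1/q-1/r}$ (recall $1/q-1/r<0$) is tiny, and finally $\theta$ large, this upper bound contradicts the lower bound $\gtrsim\theta$ of the previous paragraph. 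The two routine points are the order of quantifiers ($\varepsilon$, then $k$, then $\theta$) and checking that the rescaled graph simultaneously fits in the $J_p$-clustering ball and retains interlacing separation of order $\theta$; the genuinely new ingredient, and the main obstacle, is this fusion in the middle case of Proposition \ref{P3} (which tames $f_1$ because $r>p$) with Proposition \ref{P4} (which tames $f_2$ because $r<q$) on one and the same rescaled James graph.
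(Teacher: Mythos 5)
Your proposal is correct and follows essentially the same route as the paper: the same three-case split, with the middle case $p<r<q$ handled by applying Proposition \ref{P3} to the $J_p$-component, planting the rescaled graph $u+\theta(2k)^{-1/r}\sum e_{n_i}$ inside the resulting ball, using Ramsey to control the compact part, and applying Proposition \ref{P4} to the $J_q$-component. The only (immaterial) difference is bookkeeping: you first deduce a lower bound $\gtrsim\theta$ on the oscillation of $f_2$ and then contradict it via Proposition \ref{P4}, whereas the paper adds the two upper bounds and contradicts the embedding inequality directly.
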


\begin{proof} When $r > q$, the argument is based on a midpoint technique like in the proof of Corollary \ref{C2}.

\smallskip If $r < p$, we mimic the proof of Corollary \ref{C3}.

\smallskip So we assume, as we may,  that $1 < p < r < q < \infty$ and $f~: J_r \rightarrow J_p \oplus_{\infty} J_q$ is a map such that there exists $C\ge 1$ such that
\begin{equation}\label{4}
\forall~ x,~y \in J_r\ \ |x - y|_{J_r} \geq 1 \Rightarrow |x - y|_{J_r} \leq ||f(x) - f(y)|| \leq C|x - y|_{J_r}.
\end{equation}

\noindent We follow the proof in \cite{KR} and write $f = (g,h)$. We still denote $(e_n)_{n=1}^{\infty}$ the canonical basis of $J_r$. We fix $k \in \mathbb{N}$ and $\varepsilon > 0$. We recall that 
$$\exists~ \gamma > 0,~\forall~ x \in J_r,~ \gamma||x||_{J_r} \leq |x|_{J_r} \leq ||x||_{J_r}.$$

\noindent We start by applying the midpoint technique to the coarse Lipschitz map $g$ and deduce from Proposition \ref{P3} that there exist $\theta > \gamma^{-1}(2k)^{1/r}$, $u \in J_r$, $N \in \mathbb{N}$ and $K$ a compact subset of $J_p$ such that~:

\begin{equation}\label{5}
g(u + \theta B_{(E_N,|.|_{J_r})}) \subset K + \varepsilon\theta B_{(J_p,||.||_{J_p})}.
\end{equation}

\noindent Let $\mathbb{M} = \{n \in \mathbb{N},~n > N\}$
and $\varphi~: G_k(\mathbb{M}) \mapsto J_r$ be defined as follows 
$$\forall~\overline{n} = (n_1,...,n_k) \in G_k(\mathbb{M}),~\varphi(\overline{n}) = u + \theta(2k)^{-\frac{1}{r}}(e_{n_1} + ... + e_{n_k}).$$

\noindent Then $\varphi(\overline{n}) \in u + \theta B_{(E_N,|.|_{J_r})}$ for all $\overline{n} \in G_k(\mathbb{M})$.

\noindent And, from (\ref{5}) we deduce that $(g \circ \varphi) (G_k(\mathbb{M})) \subset K + \varepsilon\theta B_{(J_p,||.||_{J_p})}$. Thus, by Ramsey's theorem, there is an infinite subset $\mathbb{M}'$ of $\mathbb{M}$ such that 

\begin{equation}\label{6}
\textnormal{diam}_{\|\ \|_{J_p}}(g \circ \varphi)(G_k(\mathbb{M}')) \leq 3\varepsilon\theta.
\end{equation}

\noindent Since for $\overline{n} \neq \overline{m}$, we have 
$$|\varphi(\overline{n}) - \varphi(\overline{m})|_{J_r}\ge \gamma||\varphi(\overline{n}) - \varphi(\overline{m})||_{J_r} \geq \gamma\theta (2k)^{-\frac{1}{r}} > {1},$$
it follows from (\ref{4}) that
$$\forall \overline{n}, \overline{m}\in G_k(\mathbb{M})\ \ ||h \circ \varphi(\overline{n}) - h \circ \varphi(\overline{m})||_{J_q} \leq C|\varphi(\overline{n}) - \varphi(\overline{m})|_{J_r}\leq C||\varphi(\overline{n}) - \varphi(\overline{m})||_{J_r}.$$

\noindent Note now that for all $\overline{n}, \overline{m}\in G_k(\mathbb{N})$, 
$$||(e_{n_1} + ... + e_{n_k}) - (e_{m_1} + ... + e_{m_k})||_{J_q} \leq \displaystyle\sum\limits_{n_i \neq m_i}^{}||e_{n_i} - e_{m_i}||_{J_q} \leq 2d(\overline{n},\overline{m}).$$
Since moreover $|\ |_{J_q}\le\|\ \|_{J_q}$, $Lip(h \circ \varphi) \leq 2C\theta (2k)^{-\frac{1}{r}}$, when $h \circ \varphi$ is considered as a map from $G_k(\mathbb{M}')$ to $(J_q,|\ |_{J_q})$. Thus, we can apply Proposition \ref{P4} to obtain:
$$\exists~ (\overline{n},~\overline{m}) \in I_k(\mathbb{M}'),~ |h \circ \varphi(\overline{n}) - h \circ \varphi(\overline{m})|_{J_q} \leq 5C\theta(2k)^{-\frac{1}{r}}k^{\frac{1}{q}}.$$

\noindent Then, if $k$ was chosen large enough, we have: 
$$\exists~ (\overline{n},~\overline{m}) \in I_k(\mathbb{M}'),~ |h \circ \varphi(\overline{n}) - h \circ \varphi(\overline{m})|_{J_q} \leq \varepsilon\theta.$$

\noindent This, combined with (\ref{6}) implies that 
$$\exists~ (\overline{n},~\overline{m}) \in I_k(\mathbb{M}')\ \ \|f\circ\varphi(\overline{n}) - f\circ\varphi(\overline{m})\| \le3\varepsilon\theta.$$
But, 
$$\forall (\overline{n},~\overline{m}) \in I_k(\mathbb{M}')\ \ |\varphi(\overline{n}) - \varphi(\overline{m})|_{J_r}\ge \gamma\|\varphi(\overline{n}) - \varphi(\overline{m})\|_{J_r}\ge \gamma\theta.$$
If $\varepsilon$ was initially chosen such that $\varepsilon < \frac{\gamma}{3}$, this yields a contradiction with (\ref{4}), which concludes our proof.
\end{proof}

\noindent{\bf Remark.} This result can be easily extended as follows. Assume $r \in (1,\infty) \setminus \{p_1,~...,~p_n\}$ where $1 < p_1 < p_2 < ... <p_n < \infty$, then $J_r$ does not coarse Lipschitz embed into $J_{p_1} \oplus ... \oplus J_{p_n}$.

\begin{center}\textsc{Aknowledgment}\end{center}

The author is grateful to Gilles Lancien for sharing his knowledge and experience, and for helpful discussions that contributed to shape this paper.

\end{document}